\documentclass[11pt]{article}

\usepackage[margin=1in]{geometry}
\usepackage[numbers,sort&compress]{natbib}
\usepackage{amsmath,amssymb,amsfonts,amsthm,mathtools}
\usepackage{graphicx}
\usepackage{algorithm}
\usepackage{algpseudocode}
\usepackage{subcaption}
\usepackage{bm}
\usepackage{url}
\usepackage{xcolor}

\newtheorem{theorem}{Theorem}

\definecolor{blue}{rgb}{0,0,0.9}
\definecolor{red}{rgb}{0.9,0,0}
\definecolor{green}{rgb}{0,0.9,0}
\definecolor{brown}{rgb}{0.6,0.1,0.1}
\definecolor{lightgreen}{rgb}{0.1,0.5,0.1}

\begin{document}

\title{Mixed-Precision GPU Acceleration for Large-Scale Minimum Enclosing Ball Problems}

\author{
Ling Liang\\
\small Department of Mathematics, The University of Tennessee, Knoxville, Knoxville, TN, USA\\
\small \texttt{liang.ling@u.nus.edu}\\
\and
Lei Yang\thanks{Corresponding author.}\\
\small School of Computer Science and Engineering, \\
and Guangdong Province Key Laboratory of Computational Science,\\
\small Sun Yat-sen University, Guangzhou, Guangdong, China\\
\small \texttt{yanglei39@mail.sysu.edu.cn}\\
}
\date{}
\maketitle
\begin{abstract}
A mixed-precision GPU-oriented optimization framework is developed for computing the minimum enclosing ball of a collection of balls. The approach is built on an equivalent second-order cone programming reformulation and a relative-type inexact proximal augmented Lagrangian method (ripALM), which provides a high-accuracy optimization backbone while solving the inner subproblems only to a progress-dependent relative accuracy. The proximal augmented Lagrangian inherits a constraint-wise separable structure: its objective, gradient, generalized Hessian, and multiplier updates can be efficiently evaluated on GPUs as parallel maps over the input balls followed by reductions. To further improve efficiency, a mixed-precision reduction strategy is introduced. A low-precision ripALM run identifies balls near the approximate boundary, a high-precision ripALM run refines the reduced problem, and a full a posteriori feasibility check detects and reintroduces any violated discarded balls. Thus, low precision is used only for screening and warm starting, while the final feasibility is enforced against the original problem. Numerical experiments show that ripALM and mixed-precision ripALM achieve high accuracy and are substantially faster than the tested CPU-based geometric software and general-purpose conic solvers on large-scale instances.
\end{abstract}

\begin{center}
\textbf{Keywords:} Minimum Enclosing Ball, Relative-type Inexact Proximal Augmented Lagrangian Method, Mixed-precision, GPU Acceleration, Second-order Cone Programming
\end{center}

\section{Introduction}

Let $\mathcal{B}_i:=\{\bm{x}\in\mathbb{R}^d:\|\bm{x}-\bm{c}_i\|\leq r_i\}$, $i\in[m]:=\{1,\dots,m\}$, be a collection of balls in $\mathbb{R}^d$, where $\bm{c}_i\in\mathbb{R}^d$ and $r_i\geq 0$ denote the center and radius of the $i$th input ball. The minimum enclosing ball (MEB) problem seeks a center $\bm c\in\mathbb R^d$ and the smallest radius $r$ such that the ball centered at $\bm c$ with radius $r$ contains all input balls:
\begin{equation}\label{eq:meb}\tag{MEB}
\min_{r\in\mathbb{R},\; \bm{c}\in\mathbb{R}^d} \quad r
\qquad\text{s.t.}\qquad
\|\bm{c}-\bm{c}_i\|+r_i\leq r, \quad \forall\,i\in[m].
\end{equation}
When $r_i=0$ for all $i$, \eqref{eq:meb} reduces to the classical MEB problem for points, a fundamental problem in computational geometry \cite{welzl1991smallest}. Beyond this classical setting, MEB arises in a wide range of applications. In facility location and operations research, MEB is closely related to one-center and minimax location models, where the goal is to choose a center that minimizes the worst-case service distance to demand points or regions \cite{drezner2003facility}. In machine learning, enclosing-ball formulations are used in support vector data description and one-class classification to characterize data support; they also underlie core vector machines, which reduce large-scale kernel learning tasks to an MEB problem in feature space \cite{tax2004support,tsang2005core}. In optimization and numerical analysis, MEB serves as a canonical nonsmooth convex minimax problem and has motivated smoothing, first-order, and conic optimization approaches \cite{boyd2004convex,zhou2005efficient,liang2021inexact}. It is also closely connected to geometric approximation, extent measures, and core-set constructions for high-dimensional data \cite{agarwal2004approximating,badoiu2003smaller,clarkson2010coresets}. Together, these connections make MEB a useful model problem for studying how geometric structure, approximation, and scalable optimization interact.

A classical line of work studies MEB problems from the viewpoint of computational geometry. For point sets, Welzl's randomized incremental algorithm exploits the LP-type structure of the problem and computes an exact solution in expected linear time when the dimension is fixed~\cite{welzl1991smallest}. Deterministic prune-and-search methods, such as~\cite{megiddo1983linear}, provide another route to linear-time algorithms in fixed dimension. These methods are elegant and highly effective for low-dimensional point-set instances, and they have led to mature software implementations. For example, the \texttt{Miniball} package provides a compact C++ implementation and interfaces for other languages for computing the smallest enclosing ball of a set of points in arbitrary dimension~\cite{gartner1999fast,fischer2003fast}, with particularly strong practical performance in moderate dimensions. The Computational Geometry Algorithms Library (CGAL) also provides robust implementations for enclosing balls, including routines for enclosing balls of balls in arbitrary dimension~\cite{cgal:eb-26a}. However, the LP-type algorithms underlying these geometric approaches have a dimension-dependent complexity, with a maximal expected running time of order $\mathcal{O}(2^d m)$ reported for CGAL's enclosing-sphere routines~\cite{cgal:eb-26a}. Consequently, although geometric algorithms are powerful for low- and moderate-dimensional instances, they are less suitable for the high-dimensional and massively parallel regimes targeted in this work. In particular, their dependence on basis updates and recursive combinatorial structure does not naturally map to GPU architectures, where efficiency relies on regular memory access patterns, vectorized arithmetic, and large-scale parallel reductions.

Another major algorithmic paradigm for solving \eqref{eq:meb} is based on convex optimization. The MEB problem admits several tractable nonsmooth convex reformulations, including an equivalent second-order cone programming (SOCP) reformulation \cite{boyd2004convex,liang2021inexact}. This conic viewpoint makes it possible to apply interior-point methods (IPMs), which are robust, highly accurate, and supported by polynomial-time complexity theory~\cite{nesterov1994interior,sedumi1999,toh1999sdpt3,ECOS2013,Clarabel_2024}. For large-scale MEB problems, however, generic IPM implementations are often limited by the cost of solving Newton linear systems whose size grows with the number of balls $m$ and the ambient dimension $d$; these factorization-based computations also do not naturally match massively parallel GPU architectures. A related alternative is to apply operator-splitting methods to the SOCP reformulation, as in the conic solver SCS~\cite{ocpb:16}. Such methods replace large factorizations with matrix-vector products, cone projections, and residual updates, but they typically provide only moderate accuracy and may require many iterations when high-accuracy feasibility and optimality are required. A third line of work develops first-order convex-optimization algorithms tailored to MEB, including smoothing-based schemes~\cite{zhou2005efficient} and Frank-Wolfe-based approaches with away-step acceleration~\cite{yildirim2008two}. These methods exploit the max-type reformulation of MEB, namely
$\displaystyle \min_{\bm{c}\in\mathbb{R}^d}\max_{1\leq i\leq m}\{\|\bm{c}-\bm{c}_i\|+r_i\}$, and reduce each iteration to distance evaluations, vector updates, and simple low-dimensional operations. However, their efficiency can also deteriorate when a highly accurate solution is required.

For large-scale approximate computation of \eqref{eq:meb}, core-set methods form another influential class of algorithms. The seminal work~\cite{badoiu2003smaller} showed that a $(1+\varepsilon)$-approximate enclosing ball can be obtained from a small representative subset whose cardinality depends on $\varepsilon$ but not on the number of input objects. This core-set principle leads to efficient approximation algorithms for high-dimensional MEB problems~\cite{kumar2003minimum} and has inspired a broader literature on geometric approximation and core-set constructions~\cite{agarwal2005geometric,feldman2011unified}, including dynamic and sliding-window variants for MEB~\cite{wang2019coresets}. These methods are especially attractive when moderate accuracy is sufficient, since their main primitive is to identify a farthest point, or more generally, a most violated constraint, from the current center. This operation is highly parallelizable and has been accelerated on GPUs using distance filtering techniques~\cite{kallberg2014accelerated}, although repeated global reductions may introduce synchronization overhead in massively parallel implementations. However, core-set methods are primarily designed for approximation rather than high-accuracy optimization. In particular, they do not directly exploit the smoothness, curvature, or conic structure available in optimization-based formulations, which may limit their effectiveness when high-accuracy feasibility and optimality are required.

These observations reveal a gap between the maturity of existing MEB algorithms and the need for GPU-oriented methods that can compute high-accuracy solutions efficiently. Exact LP-type algorithms are powerful in low dimensions but rely on largely sequential basis updates; interior-point methods are accurate but can be limited by large matrix factorizations; and core-set methods are effective for approximation but are not primarily designed for high-accuracy optimization. At the same time, many operations that arise naturally in MEB computations, such as batched distance evaluations, maximum-violation searches, vector updates, and projections onto simple cones or constraint sets, are highly parallel and well suited to GPUs. {This motivates a structure-exploiting optimization framework that combines high-accuracy convex optimization with GPU acceleration}.

In this work, we develop such a framework by designing a GPU-tailored relative-type inexact proximal augmented Lagrangian method (ripALM) for \eqref{eq:meb}. Although ripALM has recently been established as an effective framework for structured convex optimization with strong convergence guarantees~\cite{zhu2024ripalm,yang2025convergence,zhu2026d}, its realization on modern GPU architectures has not been explored. A direct implementation is nontrivial because one must avoid sequential bottlenecks, expensive factorizations, irregular memory access, and unnecessary oversolving of the inner subproblems. Our approach exploits the fact that the SOCP reformulation of \eqref{eq:meb} has one conic constraint for each input ball. After forming the proximal augmented Lagrangian, this constraint-wise structure leads to subproblem evaluations, gradient and generalized Hessian contributions, and multiplier updates that decompose over the input balls and are coupled only through reductions. These computations can therefore be implemented through batched distance evaluations, vector operations, second-order cone projections, and parallel reductions. The relative-type stopping condition further determines when the current inner approximate solution is accurate enough to trigger the next outer update, thereby adapting the inner work to the progress of ripALM and avoiding unnecessary oversolving.

\begin{figure}[t!]
\centering
\includegraphics[width=0.95\textwidth]{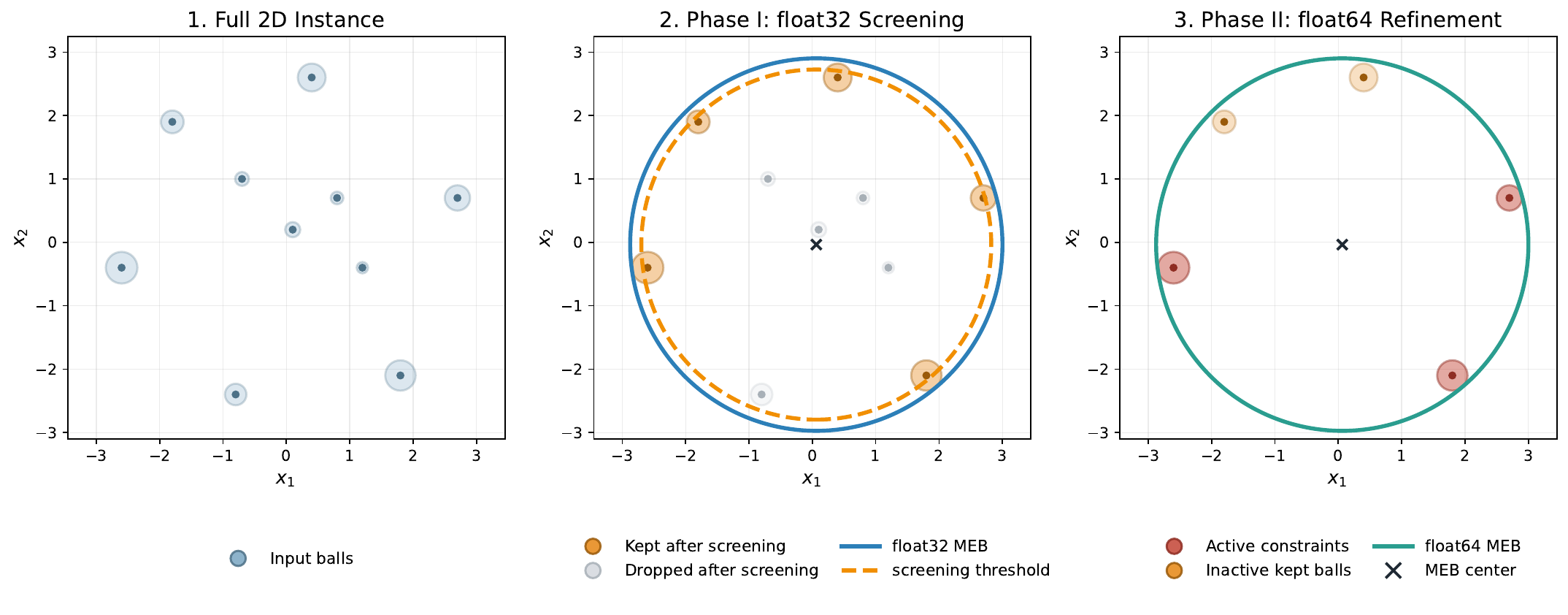}
\caption{Toy two-dimensional example illustrating the mixed-precision reduction strategy. Left: the original collection of balls. Middle: a low-precision (\texttt{float32}) ripALM run identifies balls near the approximate boundary and screens out clearly interior constraints. Right: a high-precision (\texttt{float64}) ripALM run refines the reduced problem and yields the final minimum enclosing ball.}\label{fig:meb-two-phase-toy}
\end{figure}

A second key component of our framework is a mixed-precision reduction strategy inspired by, but distinct from, the core-set philosophy. The first phase runs ripALM in low precision to obtain a coarse enclosing ball and to identify constraints near the approximate boundary. Balls with sufficiently large approximate slack are screened out, producing a reduced problem that focuses on potentially active constraints. The second phase then solves this reduced problem in higher precision, initialized by the coarse primal solution and the corresponding multipliers restricted to the retained constraints. The resulting high-accuracy candidate solution is then checked against all original constraints. If any discarded constraints are violated, they are added back to the retained index set, and the reduced problem is refined again. This active-set maintenance step ensures that the mixed-precision reduction remains effective for the original problem. Thus, the low-precision phase is used for screening and warm starting; the high-precision phase refines the reduced problem to obtain an accurate candidate solution; and the full verification step maintains consistency with the original problem by detecting and reintroducing any violated constraints. This design combines the geometric reduction effect of core-set-type ideas with the high-accuracy refinement capability of augmented Lagrangian optimization. See Figure~\ref{fig:meb-two-phase-toy} for a two-dimensional illustration. To the best of our knowledge, this is the first work that systematically integrates relative-type inexact ALM, GPU-oriented algorithm design, and mixed-precision reduction for the MEB problem. The numerical results in Section~\ref{sec:numerical-experiments} demonstrate the computational efficiency of the proposed framework.

The rest of this paper is organized as follows. Section~\ref{sec:socp-background} presents the SOCP reformulation of \eqref{eq:meb} and recalls the conic optimization ingredients used in our analysis. Section~\ref{sec:ripalm-meb} develops the ripALM framework and establishes its convergence. Section~\ref{sec:gpu-implementation} discusses the GPU implementation, including the semismooth Newton inner solver and the mixed-precision reduction strategy. Section~\ref{sec:numerical-experiments} reports numerical results, and Section~\ref{sec:conclusion} concludes the paper.

\paragraph{Notation.} For a positive integer $m$, we write $[m]:=\{1,\ldots,m\}$. The spaces $\mathbb R^n$, $\mathbb R_+^n$, and $\mathbb R_{++}^n$ denote the sets of $n$-dimensional real, nonnegative, and positive vectors, respectively. We use $\mathbb R^{p\times q}$ for the space of real $p\times q$ matrices. The Euclidean inner product and norm are denoted by $\langle\cdot,\cdot\rangle$ and $\|\cdot\|$, respectively. For a closed convex set $\mathcal C$ in a finite-dimensional Euclidean space, $\Pi_{\mathcal C}$ denotes the Euclidean projection onto $\mathcal C$, $\operatorname{dist}(x,\mathcal C):=\inf_{y\in\mathcal C}\|x-y\|$ denotes the distance from $x$ to $\mathcal C$, $\operatorname{int}(\mathcal C)$ denotes its interior, and $\mathcal N_{\mathcal C}(x)$ denotes its normal cone at $x$. For a locally Lipschitz mapping $F$, $\partial F(x)$ denotes its Clarke generalized Jacobian at $x$.

\section{Background on the SOCP reformulation of \eqref{eq:meb}}\label{sec:socp-background}

We first present the second-order cone programming (SOCP) reformulation on which our algorithmic framework is built. Since the constraint $\|\bm{c}-\bm{c}_i\|+r_i\leq r$ is equivalent to $(r-r_i,\bm{c}-\bm{c}_i)\in\mathcal Q_{d+1}$, the MEB problem can be written as
\begin{equation}\label{eq:meb-socp}
\min_{r\in\mathbb{R},\; \bm{c}\in\mathbb{R}^d}
\qquad  r \qquad
\text{s.t.} \qquad
\begin{pmatrix}
r-r_i \\
\bm{c}-\bm{c}_i
\end{pmatrix}\in\mathcal{Q}_{d+1},
\quad\forall\,i\in[m],
\end{equation}
where
\begin{equation*}
\mathcal{Q}_{d+1}
:=
\left\{
\begin{pmatrix} t \\ \bm{s} \end{pmatrix}\in\mathbb{R}^{d+1}
:\|\bm{s}\|\leq t
\right\}
\end{equation*}
is the second-order cone in $\mathbb{R}^{d+1}$. Thus, problem \eqref{eq:meb-socp} is exactly the MEB problem \eqref{eq:meb}, written in conic form without introducing additional variables.

Note that the primal optimal solution of \eqref{eq:meb-socp} (and hence \eqref{eq:meb}) is unique, even though the objective is linear and the feasible region need not be strictly convex. To see this, suppose that two distinct centers $\bm{c}^1\ne \bm{c}^2$ attain the same optimal radius $r^*$. Then, every input ball $\mathcal B_i$ is contained in both $\mathcal B(\bm{c}^1,r^*)$ and $\mathcal B(\bm{c}^2,r^*)$. Set
\begin{equation*}
\bar{\bm{c}}:=(\bm{c}^1+\bm{c}^2)/2, \quad \delta:=\|\bm{c}^1-\bm{c}^2\|/2>0.
\end{equation*}
Then, for every point $\bm{x}\in\cup_{i=1}^m\mathcal B_i$,
\begin{equation*}
\|\bm{x}-\bar{\bm{c}}\|^2+\delta^2
=\frac{1}{2}\|\bm{x}-\bm{c}^1\|^2 + \frac{1}{2}\|\bm{x}-\bm{c}^2\|^2
\leq (r^*)^2,
\end{equation*}
and hence $\|\bm{x}-\bar{\bm{c}}\|\le \sqrt{(r^*)^2-\delta^2}<r^*$. Thus, all input balls are contained in a ball with center $\bar{\bm{c}}$ and radius strictly smaller than $r^*$, contradicting optimality. Therefore, the optimal pair $(r^*,\bm{c}^*)$ is unique.

Slater's condition also holds. Indeed, choose any $\bm{c}\in\mathbb{R}^d$ and any
\begin{equation*}
r>\max_{i\in[m]}\bigl(\|\bm{c}-\bm{c}_i\|+r_i\bigr).
\end{equation*}
Then, for every $i\in[m]$,
\begin{equation*}
\left\|\bm{c}-\bm{c}_i \right\| < r-r_i
\quad \Longleftrightarrow \quad
\begin{pmatrix}
r-r_i \\
\bm{c}-\bm{c}_i
\end{pmatrix}
\in \operatorname{int}(\mathcal{Q}_{d+1}).
\end{equation*}
Consequently, strong duality holds for \eqref{eq:meb-socp}.

We next derive the dual problem of \eqref{eq:meb-socp}. Recall that $\mathcal Q_{d+1}$ is self-dual. We introduce the multiplier $(u_i,\bm{v}_i)\in\mathcal Q_{d+1}$ for each conic constraint, and write the Lagrangian function as follows:
\begin{equation}\label{eq-lag}
\mathcal L(r,\bm{c};\{u_i,\bm{v}_i\})
:= r
- \sum_{i=1}^m u_i(r-r_i)
- \sum_{i=1}^m \bm{v}_i^\top(\bm{c}-\bm{c}_i).
\end{equation}
Minimizing \eqref{eq-lag} over $(r,\bm{c})$ gives the dual feasibility conditions
$\sum_{i=1}^m u_i=1$ and $\sum_{i=1}^m\bm{v}_i=\bm{0}$. Hence, the dual problem is
\begin{equation}\label{eq:dual}
\max_{\{u_i,\,\bm{v}_i\}}
\quad
\sum_{i=1}^m \bigl(u_i r_i + \bm{v}_i^\top\bm{c}_i\bigr)
\qquad \text{s.t.} \qquad
\sum_{i=1}^m u_i = 1, \quad
\sum_{i=1}^m \bm{v}_i = \bm{0}, \quad
\begin{pmatrix}
u_i \\ \bm{v}_i
\end{pmatrix}
\in\mathcal{Q}_{d+1},
\quad \forall\,i\in[m].
\end{equation}
The corresponding Karush--Kuhn--Tucker (KKT) system is
\begin{equation*}
\bm{0} =
\begin{pmatrix}1 \\ \bm{0}\end{pmatrix}
- \sum_{i=1}^m\begin{pmatrix}
u_i \\
\bm{v}_i
\end{pmatrix},
\qquad
\bm{0} \in
\begin{pmatrix} u_i \\ \bm{v}_i \end{pmatrix}
+\mathcal{N}_{\mathcal{Q}_{d+1}}
\left(\begin{pmatrix}
r-r_i\\
\bm{c}-\bm{c}_i
\end{pmatrix}\right),
\quad \forall\,i\in[m],
\end{equation*}
where $\mathcal N_{\mathcal Q_{d+1}}$ denotes the normal cone of $\mathcal Q_{d+1}$. The second inclusion encodes primal feasibility, dual cone feasibility, and complementarity. By Slater's condition, the KKT system admits at least one solution. For convenience, we denote the above KKT system compactly by
\begin{equation*}
\mathcal S(r,\bm{c},\{u_i,\bm{v}_i\}_{i=1}^m)=\bm{0},
\end{equation*}
and write $\mathcal S^{-1}(\bm{0})$ for the set of KKT solutions.

Moreover, for a given penalty parameter $\sigma>0$, the augmented Lagrangian function for the primal problem \eqref{eq:meb-socp} can be expressed as (see, e.g.,~\cite[Example 11.57]{rockafellar1998variational})
\begin{equation*}
\phi_\sigma(r,\bm{c};\{u_i,\bm{v}_i\})
:= r + \frac{\sigma}{2}\sum_{i=1}^m\left\|\Pi_{\mathcal Q_{d+1}}
\left(\begin{pmatrix}
r_i-r \\ \bm{c}_i-\bm{c}
\end{pmatrix}
+
\frac{1}{\sigma}
\begin{pmatrix}
u_i \\ \bm{v}_i
\end{pmatrix}
\right)\right\|^2
- \frac{1}{2\sigma}\sum_{i=1}^m
\left\|\begin{pmatrix}
u_i \\ \bm{v}_i
\end{pmatrix}
\right\|^2.
\end{equation*}
With the multipliers $\{(u_i,\bm{v}_i)\}_{i=1}^m$ fixed, differentiating $\phi_\sigma$ with respect to $(r,\bm{c})$ gives
\begin{equation*}
\nabla \phi_\sigma(r,\bm{c})
=
\begin{pmatrix}
1 \\ \bm{0}
\end{pmatrix}
- \sigma\sum_{i=1}^m
\Pi_{\mathcal Q_{d+1}}
\left(
\begin{pmatrix}
r_i-r \\ \bm{c}_i-\bm{c}
\end{pmatrix}
+
\frac{1}{\sigma}
\begin{pmatrix}
u_i \\ \bm{v}_i
\end{pmatrix}
\right).
\end{equation*}

The projection onto $\mathcal Q_{d+1}$, which is the basic computational primitive in both $\phi_\sigma$ and $\nabla\phi_\sigma$, has the closed form
\begin{equation*}
\Pi_{\mathcal Q_{d+1}}
\left(
\begin{pmatrix}
a \\ \bm{b}
\end{pmatrix}\right)
=
\begin{cases}
\begin{pmatrix}
a \\ \bm{b}
\end{pmatrix}, & \|\bm{b}\|\leq a, \\
\bm{0}, & \|\bm{b}\|\leq -a, \\
\displaystyle
\frac{a+\|\bm{b}\|}{2}
\begin{pmatrix}
1 \\ \bm{b}/\|\bm{b}\|
\end{pmatrix},
& \text{otherwise},
\end{cases} \qquad \forall \begin{pmatrix}
a \\ \bm{b}
\end{pmatrix}\in \mathbb{R}^{d+1}.
\end{equation*}
Therefore, each evaluation of $\phi_\sigma$ and $\nabla\phi_\sigma$ consists of $m$ independent second-order cone projections. This separable structure is the main reason that the augmented Lagrangian framework can be implemented efficiently on GPUs.

\section{A relative-type inexact proximal augmented Lagrangian method}\label{sec:ripalm-meb}

We are now ready to present a relative-type inexact proximal augmented Lagrangian method (ripALM) for solving the SOCP reformulation \eqref{eq:meb-socp} in Algorithm \ref{alg:ipalm-meb}. One can see that, at the $k$th outer iteration, ripALM approximately minimizes the strongly convex proximal augmented Lagrangian function, updates the conic multipliers by the projection formula in \eqref{eq:meb-multiplier-update}, and accepts the inner approximate solution once a relative-type stopping condition is satisfied. This relative-type condition is important computationally: it avoids prescribing a summable sequence of absolute inner tolerances and instead measures the inner error against the actual progress made during the current outer iteration. Specifically, the vector $\bm{g}^{k+1}$ on the left-hand side is the gradient of the proximal augmented Lagrangian function evaluated at the accepted inner iterate, while the right-hand side measures the squared changes in the multipliers and primal variables. Thus, the inner problem is solved more accurately only when the outer iteration makes correspondingly smaller progress, as the outer iterates approach a KKT point.

\begin{algorithm}[htb!]
\caption{ripALM for solving the MEB reformulation \eqref{eq:meb-socp}}\label{alg:ipalm-meb}
\begin{algorithmic}[1]
\State \textbf{Input:}
$\rho\in[0,1)$,
$\{\sigma_k\}\subset\mathbb R_{++}$,
$\{\tau_k\}\subset\mathbb R_{++}$,
$(r^0,\bm{c}^0)\in\mathbb R\times\mathbb R^d$,
$(\alpha^0,\bm{\beta}^0)\in\mathbb R\times\mathbb R^d$, and
$(u_i^0,\bm{v}_i^0)\in\mathcal Q_{d+1}$, $\forall\,i\in[m]$.
\For{$k=0,1,2,\ldots$ until termination}

    \State Approximately solve the proximal augmented Lagrangian subproblem
    \begin{equation}\label{eq:meb-prox-alm-subproblem}
    \begin{aligned}
        \min_{r\in\mathbb R,\,\bm{c}\in\mathbb R^d}\;
        \Psi_k(r,\bm{c})
        &:=
        \phi_{\sigma_k}
        (r,\bm{c};\{u_i^k,\bm{v}_i^k\})
        +
        \frac{\tau_k}{2\sigma_k}
        \left\|
        \begin{pmatrix}
            r-r^k\\
            \bm{c}-\bm{c}^k
        \end{pmatrix}
        \right\|^2,
    \end{aligned}
    \end{equation}

    to find $(r^{k+1},\bm{c}^{k+1})$ satisfying the relative-type stopping condition
    \begin{equation}\label{eq:meb-relative-error-condition}
    2\sigma_k
    \left|
    \left\langle
    \begin{pmatrix}
        \alpha^k-r^{k+1}\\
        \bm{\beta}^k-\bm{c}^{k+1}
    \end{pmatrix},
    \,\bm{g}^{k+1}
    \right\rangle
    \right|
    +
    \sigma_k^2\|\bm{g}^{k+1}\|^2
    \leq
    \rho
    \left[
    \sum_{i=1}^m
    \left\|
    \begin{pmatrix}
        u_i^{k+1}-u_i^k\\
        \bm{v}_i^{k+1}-\bm{v}_i^k
    \end{pmatrix}
    \right\|^2
    +
    \tau_k
    \left\|
    \begin{pmatrix}
        r^{k+1}-r^k\\
        \bm{c}^{k+1}-\bm{c}^k
    \end{pmatrix}
    \right\|^2
    \right],
    \end{equation}

    where
    \begin{equation}
    \label{eq:meb-multiplier-update}
        \begin{pmatrix}
            u_i^{k+1} \\ \bm{v}_i^{k+1}
        \end{pmatrix}
        :=
        \sigma_k
        \Pi_{\mathcal Q_{d+1}}
        \left(
            \begin{pmatrix}
                r_i-r^{k+1}\\
                \bm{c}_i-\bm{c}^{k+1}
            \end{pmatrix}
            +
            \frac{1}{\sigma_k}
            \begin{pmatrix}
                u_i^k \\ \bm{v}_i^k
            \end{pmatrix}
        \right),
        \qquad \forall\,i\in[m],
    \end{equation}

    and
    \begin{equation*}
        \bm{g}^{k+1}
        := \nabla\Psi_k(r^{k+1}, \bm{c}^{k+1}) =
        \begin{pmatrix}
            1 \\ \bm{0}
        \end{pmatrix}
        -
        \sum_{i=1}^m
        \begin{pmatrix}
            u_i^{k+1} \\ \bm{v}_i^{k+1}
        \end{pmatrix}
        +
        \frac{\tau_k}{\sigma_k}
        \begin{pmatrix}
            r^{k+1}-r^k \\ \bm{c}^{k+1}-\bm{c}^k
        \end{pmatrix}.
    \end{equation*}

    \State Update the auxiliary sequence
    \begin{equation*}
        \begin{pmatrix}
            \alpha^{k+1} \\ \bm{\beta}^{k+1}
        \end{pmatrix}
        =
        \begin{pmatrix}
            \alpha^k \\ \bm{\beta}^k
        \end{pmatrix}
        -
        \sigma_k\bm{g}^{k+1}.
    \end{equation*}

\EndFor
\end{algorithmic}
\end{algorithm}

We next present the convergence properties for ripALM. The results follow from the general ripALM theory in~\cite{zhu2024ripalm,yang2025convergence,zhu2026d}; the only additional observation used here is the uniqueness of the primal solution of problem \eqref{eq:meb-socp}.

\begin{theorem}\label{thm-convergence}
Let $\rho\in[0,1)$. Suppose that $\mathcal S^{-1}(\bm{0})$ is nonempty, $\{\sigma_k\}\subset\mathbb R_{++}$ satisfies $\sigma_k\ge \sigma_{\rm min}>0$ for all $k\ge0$, and $\{\tau_k\}\subset\mathbb R_{++}$ satisfies
\begin{equation*}
\tau_k\ge \tau_{\rm min}>0,\qquad
\tau_{k+1}\le (1+\nu_k)\tau_k,\qquad
\nu_k\ge0,\qquad
\sum_{k=0}^{\infty}\nu_k<\infty.
\end{equation*}
Let $\{(r^k,\bm{c}^k)\}$ and $\{(u_i^k,\bm{v}_i^k):i\in[m]\}$ be generated by the ripALM in Algorithm~\ref{alg:ipalm-meb}. Then, these sequences are bounded. Moreover, $\{(r^k,\bm{c}^k)\}$ converges to the unique optimal solution of problem \eqref{eq:meb-socp}, and $\{(u_i^k,\bm{v}_i^k):i\in[m]\}$ converges to an optimal solution of the dual problem \eqref{eq:dual}.
\end{theorem}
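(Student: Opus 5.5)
The plan is to recast Algorithm~\ref{alg:ipalm-meb} as a relative-type inexact proximal point method (PPM) applied to the maximal monotone KKT operator $\mathcal T$ of \eqref{eq:meb-socp}, and then invoke the general ripALM convergence theory of~\cite{zhu2024ripalm,yang2025convergence,zhu2026d}. Write $x=(r,\bm c)$ and $y=\{(u_i,\bm v_i)\}_{i=1}^m$. Let $\ell(x,y)=r-\sum_i u_i(r-r_i)-\sum_i\bm v_i^\top(\bm c-\bm c_i)-\sum_i\delta_{\mathcal Q_{d+1}}(u_i,\bm v_i)$ be the closed convex--concave Lagrangian extended by the dual cone indicator. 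Its saddle operator $\mathcal T(x,y)=\partial_x\ell\times\partial_y(-\ell)$ is maximal monotone, and $\mathcal T^{-1}(\bm 0)=\mathcal S^{-1}(\bm 0)$, which is nonempty by hypothesis and by Slater's condition. The first step is the standard identity: the multiplier update \eqref{eq:meb-multiplier-update} is exactly the exact maximizer in $y$ of $\ell(x^{k+1},y)-\frac{1}{2\sigma_k}\|y-y^k\|^2$. Moreover, $\bm g^{k+1}=\nabla\Psi_k(x^{k+1})$ means that
\[
(\bm g^{k+1},\bm 0)\in \mathcal T(x^{k+1},y^{k+1})+\Lambda_k\big((x^{k+1},y^{k+1})-(x^k,y^k)\big),
\]
where $\Lambda_k=\mathrm{diag}(\tau_k\sigma_k^{-1}I,\sigma_k^{-1}I)$. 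Thus each outer step is a variable-metric proximal step on $\mathcal T$ with error $(\bm g^{k+1},\bm 0)$, and the error lives only in the $x$-block.

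The second step derives the key descent inequality. For any $(x^*,y^*)\in\mathcal S^{-1}(\bm 0)$, monotonicity of $\mathcal T$ applied to the inclusion above, combined with the auxiliary sequence $(\alpha^{k+1},\bm\beta^{k+1})=(\alpha^k,\bm\beta^k)-\sigma_k\bm g^{k+1}$, should give a Fejér-type estimate for the merit quantity
\[
E_k:=\sum_i\|(u_i^k,\bm v_i^k)-(u_i^*,\bm v_i^*)\|^2+\tau_k\|x^k-x^*\|^2+\|(\alpha^k,\bm\beta^k)-x^*\|^2 ,
\]
in the form
\[
E_{k+1}\le(1+\nu_k)E_k-(1-\rho)\Big[\textstyle\sum_i\|y_i^{k+1}-y_i^k\|^2+\tau_k\|x^{k+1}-x^k\|^2\Big].
\]
The cross term $2\sigma_k\langle(\alpha^k,\bm\beta^k)-x^{k+1},\bm g^{k+1}\rangle+\sigma_k^2\|\bm g^{k+1}\|^2$ produced when expanding $\|(\alpha^{k+1},\bm\beta^{k+1})-x^*\|^2$ is precisely the left-hand side of \eqref{eq:meb-relative-error-condition}. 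That is why the test is absorbed with the factor $\rho<1$. The factor $(1+\nu_k)$ comes from replacing $\tau_{k+1}$ by $\tau_k$. I expect this bookkeeping, namely matching the exact algebra of the cited framework including the role of $(\alpha,\bm\beta)$ and the varying $\tau_k$, to be the main obstacle. The fallback is to cite the corresponding lemma of~\cite{yang2025convergence} directly after verifying that its hypotheses match.

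The third step draws the consequences. Since $\sum\nu_k<\infty$, the quasi-Fejér lemma shows that $E_k$ converges, which yields boundedness of $\{x^k\}$ (using $\tau_k\ge\tau_{\min}$), of $\{y^k\}$, and of $(\alpha^k,\bm\beta^k)$. It also shows that the successive differences are square-summable, so $x^{k+1}-x^k\to 0$ and $y^{k+1}-y^k\to0$. The stopping test then gives $\sigma_k^2\|\bm g^{k+1}\|^2\to0$, hence $\bm g^{k+1}\to0$ because $\sigma_k\ge\sigma_{\min}$. One point needs care: $\tau_k/\sigma_k$ must stay bounded. Since $\tau_k$ is bounded above by $\tau_0\prod(1+\nu_j)<\infty$ and $\sigma_k\ge\sigma_{\min}$, the residual $\Lambda_k(\cdot)$ tends to zero. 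Closedness of the graph of $\mathcal T$ then shows that every cluster point of $(x^k,y^k)$ lies in $\mathcal S^{-1}(\bm 0)$.

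For full-sequence convergence, fix a cluster point $(\bar x,\bar y)$ and take it as the reference point in $E_k$. The limit of $E_k$ is then zero along the subsequence, and because the limit exists, it is zero for the whole sequence. For this I also need $(\alpha^k,\bm\beta^k)-x^k\to 0$ along the subsequence, or else I use the framework's argument in which $(\alpha,\bm\beta)$ is treated as part of the iterate. Finally, uniqueness of the primal solution, established in Section~\ref{sec:socp-background}, identifies the limit of $x^k$ as the unique optimal pair $(r^*,\bm c^*)$. The limit of $y^k$ is some KKT multiplier, hence a dual optimal solution of \eqref{eq:dual}.
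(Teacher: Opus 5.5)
\textbf{Gap in the final step.} Your first three steps are sound. The inclusion $(\bm g^{k+1},\bm 0)\in\mathcal T(x^{k+1},y^{k+1})+\Lambda_k\big((x^{k+1},y^{k+1})-(x^k,y^k)\big)$ is correct, and so is the merit function $E_k$. The quasi-Fejér inequality also holds: expanding $\|(\alpha^{k+1},\bm\beta^{k+1})-x^*\|^2$ does produce the left side of \eqref{eq:meb-relative-error-condition}. These give boundedness, $\bm g^{k+1}\to\bm 0$, and optimality of every cluster point, which is what the paper takes from the cited ripALM theory.

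The problem is your last paragraph. The claim that $E_k\to 0$ when the reference is a cluster point $(\bar x,\bar y)$ is false in general, because nothing drives $(\alpha^k,\bm\beta^k)$ toward $\bar x$. For example, suppose every subproblem is solved exactly. Then $\bm g^{k+1}=\bm 0$ satisfies \eqref{eq:meb-relative-error-condition}, so $(\alpha^k,\bm\beta^k)\equiv(\alpha^0,\bm\beta^0)$ is an arbitrary input, and $E_k\ge\|(\alpha^0,\bm\beta^0)-\bar x\|^2>0$ for all $k$.

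Neither of your remedies works. First, $(\alpha^k,\bm\beta^k)-x^k\to\bm 0$ fails in this example. Second, treating $(\alpha,\bm\beta)$ as part of the iterate does not help, because its reference value in $E_k$ is $x^*$, not a cluster point of $(\alpha^k,\bm\beta^k)$. So as written you have not established full convergence of either the primal or the dual sequence. You use uniqueness only to name the limit, but it is actually what makes primal convergence possible. That is why the paper takes from the general theory only boundedness, dual convergence, and optimality of primal cluster points, and then invokes uniqueness.

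\textbf{How to repair it.} The fix uses only what you already proved.
\begin{itemize}
\item \emph{Primal sequence.} $\{(r^k,\bm c^k)\}$ is bounded and all its cluster points lie in the singleton primal solution set. By the paper's subsequence argument, it converges to $(r^*,\bm c^*)$.
\item \emph{Dual sequence.} Replace ``$E_k\to0$'' with an Opial-type argument. Let $\bar y^1,\bar y^2$ be two cluster points of $\{y^k\}$. Then $(x^*,\bar y^1)$ and $(x^*,\bar y^2)$ both lie in $\mathcal S^{-1}(\bm 0)$, so the quasi-Fejér lemma makes $E_k$ convergent for each of these references. The two merit functions differ only in the $y$-term, since $\tau_k\|x^k-x^*\|^2$ and $\|(\alpha^k,\bm\beta^k)-x^*\|^2$ coincide and cancel. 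Hence $\lim_k\big(\|y^k-\bar y^1\|^2-\|y^k-\bar y^2\|^2\big)$ exists. Evaluating it along the two subsequences gives $-\|\bar y^1-\bar y^2\|^2=\|\bar y^1-\bar y^2\|^2$, so $\bar y^1=\bar y^2$ and the dual sequence converges.
\end{itemize}
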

\begin{proof}
The boundedness of the generated sequences and the convergence of the dual sequence follow the same proof as \cite[Theorem 3.1]{yang2025convergence}. The same proof also shows that every accumulation point of $\{(r^k,\bm{c}^k)\}$ is an optimal solution of \eqref{eq:meb-socp}. Since \eqref{eq:meb-socp} has the unique optimal solution $(r^*,\bm{c}^*)$, every accumulation point of $\{(r^k,\bm{c}^k)\}$ must be $(r^*,\bm{c}^*)$. If the whole sequence did not converge to $(r^*,\bm{c}^*)$, there would exist an $\varepsilon>0$ and a subsequence $\{(r^{k_j},\bm{c}^{k_j})\}$ such that
\begin{equation*}
\big\|(r^{k_j},\bm{c}^{k_j}) - (r^*, \bm{c}^*) \big\| \geq \varepsilon, 
\quad~~\text{for all}~~j\geq0.
\end{equation*}
By boundedness, this subsequence has a further convergent subsequence with limit $(\bar r,\bar{\bm{c}})$. The limit is an accumulation point and hence is optimal. Uniqueness gives $(\bar r,\bar{\bm{c}})=(r^*,\bm{c}^*)$, contradicting the displayed inequality. Therefore, the whole primal sequence converges to $(r^*,\bm{c}^*)$.
\end{proof}

\begin{theorem}\label{thm-complexity}
Suppose that all assumptions of Theorem~\ref{thm-convergence} hold. Define the weighted ergodic average sequence
\begin{equation*}
\begin{pmatrix}
\hat{r}^k \\ \hat{\bm{c}}^k
\end{pmatrix} := \frac{1}{\sum_{\ell=0}^{k-1}\sigma_\ell}\sum_{\ell=0}^{k-1}\sigma_\ell \begin{pmatrix}
r^{\ell+1} \\ \bm{c}^{\ell+1}
\end{pmatrix},\quad \forall\,k\geq 1.
\end{equation*}
Then, it holds that
\begin{equation*}
\max\left\{\left\lvert \hat{r}^k - r^*\right\rvert, \,\max_{i\in[m]} \left\{\mathrm{dist}\left(\begin{pmatrix}
\hat{r}^k - r_i \\ \hat{\bm{c}}^k - \bm{c}_i
\end{pmatrix}, \mathcal{Q}_{d+1} \right) \right\}\right\} \leq \mathcal{O}\left(\frac{1}{\sum_{\ell=0}^{k-1}\sigma_\ell}\right),\quad \forall\,k\geq 1.
\end{equation*}
\end{theorem}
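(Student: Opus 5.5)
The plan is to derive the ergodic bounds from the multiplier update \eqref{eq:meb-multiplier-update} and the convexity of $\mathcal Q_{d+1}$, together with the boundedness in Theorem~\ref{thm-convergence}.

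\emph{Feasibility residual.} By the Moreau decomposition, $\bm{w}=\Pi_{\mathcal Q}(\bm{w})-\Pi_{\mathcal Q^\circ}(\bm{w})$, where $\mathcal Q^\circ=-\mathcal Q$ because $\mathcal Q_{d+1}$ is self-dual. Applying this to \eqref{eq:meb-multiplier-update} gives
\begin{equation*}
\begin{pmatrix} r^{\ell+1}-r_i\\ \bm{c}^{\ell+1}-\bm{c}_i\end{pmatrix}
=\bm{q}_i^{\ell}+\frac{1}{\sigma_\ell}\left[\begin{pmatrix}u_i^\ell\\ \bm{v}_i^\ell\end{pmatrix}-\begin{pmatrix}u_i^{\ell+1}\\ \bm{v}_i^{\ell+1}\end{pmatrix}\right],
\end{equation*}
with $\bm{q}_i^\ell:=\Pi_{\mathcal Q}\bigl(-(\text{projection argument})\bigr)\in\mathcal Q_{d+1}$. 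Multiply by $\sigma_\ell$, sum over $\ell=0,\dots,k-1$, and divide by $S_k:=\sum_{\ell=0}^{k-1}\sigma_\ell$. The multiplier differences telescope, so $(\hat r^k-r_i,\hat{\bm{c}}^k-\bm{c}_i)$ equals a convex combination of cone elements, which lies in $\mathcal Q_{d+1}$ since it is a convex cone, plus the term $\frac{1}{S_k}\bigl[(u_i^0,\bm{v}_i^0)-(u_i^k,\bm{v}_i^k)\bigr]$. Theorem~\ref{thm-convergence} bounds this term by $\mathcal O(1/S_k)$, and therefore the distance to the cone is $\mathcal O(1/S_k)$ uniformly in $i$, because $m$ is fixed.

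\emph{Objective gap.} Lower bound: take an optimal dual solution $\{(u_i^*,\bm{v}_i^*)\}$. Weak duality with the perturbation gives
\begin{equation*}
\hat r^k=\sum_i u_i^*\hat r^k+\Bigl\langle \textstyle\sum_i \bm{v}_i^*,\hat{\bm{c}}^k\Bigr\rangle
\ge r^*+\sum_i\Bigl\langle (u_i^*,\bm{v}_i^*),\bm{e}_i^k\Bigr\rangle,
\end{equation*}
where $\bm{e}_i^k$ is the telescoped error. This uses $\langle(u_i^*,\bm{v}_i^*),\text{cone part}\rangle\ge0$ and the dual objective equals $r^*$. Hence $\hat r^k\ge r^*-\mathcal O(1/S_k)$.

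Upper bound: I would use the optimality residual $\bm{g}^{\ell+1}$ and the auxiliary sequence. The definition of $\bm{g}$ gives
\begin{equation*}
\begin{pmatrix}1\\ \bm 0\end{pmatrix}-\sum_i\begin{pmatrix}u_i^{\ell+1}\\ \bm{v}_i^{\ell+1}\end{pmatrix}=\bm{g}^{\ell+1}-\frac{\tau_\ell}{\sigma_\ell}\Delta x^{\ell}.
\end{equation*}
For each $\ell$, complementarity of the projection pair gives $\langle(u_i^{\ell+1},\bm{v}_i^{\ell+1}),\bm{q}_i^\ell\rangle=0$. This yields
\begin{equation*}
r^{\ell+1}=\sum_i\bigl(u_i^{\ell+1}r_i+(\bm{v}_i^{\ell+1})^\top\bm{c}_i\bigr)+\bigl\langle \bm{g}^{\ell+1}-\tfrac{\tau_\ell}{\sigma_\ell}\Delta x^\ell,\,x^{\ell+1}\bigr\rangle+\tfrac{1}{\sigma_\ell}\sum_i\bigl\langle (u_i^{\ell+1},\bm{v}_i^{\ell+1}),(u_i^\ell,\bm{v}_i^\ell)-(u_i^{\ell+1},\bm{v}_i^{\ell+1})\bigr\rangle.
\end{equation*}
I would weight this by $\sigma_\ell$, sum, and compare with $r^*$ via the saddle-point inequality at a KKT point $x^*=(r^*,\bm{c}^*)$. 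Concretely, I would use the standard ALM inequality
\begin{equation*}
\sigma_\ell\bigl(r^{\ell+1}-r^*\bigr)\le \tfrac12\bigl(\|z^\ell-z^*\|^2-\|z^{\ell+1}-z^*\|^2\bigr)+\text{(error terms)},
\end{equation*}
which comes from the proximal-point structure. Here $z$ collects the multipliers and the $\tau$-weighted primal variables. The $\bm{g}$-terms would be handled through $(\alpha,\bm\beta)$ exactly as in \cite{yang2025convergence}: $\sigma_\ell\langle \bm{g}^{\ell+1},x^{\ell+1}\rangle$ is split using $x^{\ell+1}=(\alpha^\ell,\bm\beta^\ell)-\bigl((\alpha^\ell,\bm\beta^\ell)-x^{\ell+1}\bigr)$. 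The relative criterion \eqref{eq:meb-relative-error-condition} with $\rho<1$ then absorbs the cross terms into the decrease of a Lyapunov function involving $\|(\alpha,\bm\beta)-x^*\|^2$. Summing and applying Jensen's inequality to the convex map $x\mapsto r$ gives $\hat r^k-r^*\le C/S_k$, where $C$ depends on the initial distance and on $\prod(1+\nu_\ell)<\infty$.

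The main obstacle is this upper bound. The delicate part is producing the telescoping Lyapunov inequality with the relative error, since the varying $\tau_\ell$ needs the $(1+\nu_\ell)$ factor control. I would import that inequality directly from the analysis behind \cite[Theorem 3.1]{yang2025convergence} rather than re-derive it.
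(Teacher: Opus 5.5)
Your proposal is correct, but it is more explicit than the paper's proof, which is a one-line specialization of the general ergodic-rate theorem \cite[Theorem 3.3]{yang2025convergence}.

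You derive two of the three estimates directly from the conic structure. The Moreau decomposition of the multiplier update \eqref{eq:meb-multiplier-update}, followed by telescoping, gives the feasibility bound with the explicit constant $\sup_k\max_i\|(u_i^0-u_i^k,\bm v_i^0-\bm v_i^k)\|$. Pairing the same decomposition with a dual optimal solution gives $\hat r^k\ge r^*-\mathcal O(1/S_k)$. Neither step touches the inexactness criterion.

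Only the upper bound $\hat r^k-r^*\le C/S_k$ needs \eqref{eq:meb-relative-error-condition}, and your sketch of it goes through. Write $x=(r,\bm c)$ and $\lambda_i=(u_i,\bm v_i)$. Your identity for $r^{\ell+1}$, together with $\langle\lambda_i^{\ell+1},x^*-(r_i,\bm c_i)\rangle\ge0$, gives $\sigma_\ell(r^{\ell+1}-r^*)\le\sigma_\ell\langle\bm g^{\ell+1},x^{\ell+1}-x^*\rangle-\tau_\ell\langle x^{\ell+1}-x^\ell,x^{\ell+1}-x^*\rangle+\sum_i\langle\lambda_i^{\ell+1},\lambda_i^\ell-\lambda_i^{\ell+1}\rangle$. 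Splitting the $\bm g$-term through $(\alpha^\ell,\bm\beta^\ell)$ leaves error terms equal to exactly half the left-hand side of \eqref{eq:meb-relative-error-condition}. The negative squared-difference terms absorb these since $\rho<1$. The varying proximal weights contribute at most $\tfrac12\sum_\ell(\tau_{\ell+1}-\tau_\ell)_+\|x^{\ell+1}-x^*\|^2$, which is finite by boundedness and $\sum_\ell\nu_\ell<\infty$. Since $r$ is linear in $(r,\bm c)$, the final averaging step is an identity rather than an application of Jensen.

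What your route buys is transparency. It exhibits the constants and confines the inner-inexactness control to the objective upper bound alone. It also shows that uniqueness of the primal solution, which the paper's proof invokes, plays no role: $r^*$ is the optimal value regardless. The paper's route buys brevity by delegating all of this to the cited theorem.

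One small slip: the Moreau decomposition reads $\bm w=\Pi_{\mathcal Q_{d+1}}(\bm w)+\Pi_{\mathcal Q_{d+1}^\circ}(\bm w)=\Pi_{\mathcal Q_{d+1}}(\bm w)-\Pi_{\mathcal Q_{d+1}}(-\bm w)$, with a plus sign in front of the polar projection. Your definition of $\bm q_i^\ell$ and the identity you derive already match the correct form, so nothing downstream changes.
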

\begin{proof}
This is the specialization of the global ergodic rate for ripALM in~\cite[Theorem 3.3]{yang2025convergence} to the conic formulation \eqref{eq:meb-socp}. Slater's condition ensures strong duality and boundedness of the multiplier set, while the uniqueness of the primal solution identifies the limiting objective value with $r^*$. Applying the general bound to the objective residual and the conic feasibility residual gives the stated estimate.
\end{proof}

Theorem~\ref{thm-complexity} shows that the global ergodic rate of ripALM is determined by the growth of the penalty parameter $\sigma_k$. In particular, a fixed penalty parameter $\sigma_k=\sigma>0$ yields an $\mathcal{O}(1/k)$ rate, while a linearly increasing choice $\sigma_k=\mathcal{O}(k)$ improves the rate to $\mathcal{O}(1/k^2)$. Moreover, if $\sigma_k=\mathcal{O}(c^k)$ for some constant $c>1$, the corresponding rate becomes $\mathcal{O}(c^{-k})$. These estimates suggest that increasing $\sigma_k$ can accelerate the outer convergence. In practice, however, an overly aggressive increase may lead to ill-conditioned inner subproblems and numerical instability. Thus, the update of the penalty parameter should balance faster outer progress against the conditioning and reliable solution of the inner subproblems.

\section{Efficient GPU implementation}\label{sec:gpu-implementation}

In this section, we discuss how the ripALM framework in Section~\ref{sec:ripalm-meb} can be implemented efficiently on parallel architectures, with particular emphasis on GPU acceleration. The key observation is that the dominant operations in the evaluation of the proximal augmented Lagrangian subproblem objective, its gradient, its generalized Hessian, and the multiplier update are separable across the input balls. Thus, the expensive part of each inner iteration can be organized as a parallel map over the constraints, followed by a small number of global reductions.

To simplify notation, we suppress the outer-iteration index in this section. Let $(\bar{r},\bar{\bm{c}})$ denote the current proximal center, let $\{(u_i,\bm{v}_i)\}_{i=1}^m$ denote the current multipliers, and let $\sigma,\tau>0$ denote the current penalty and proximal parameters. The basic conic primitive is the batched projection
\begin{equation*}
\begin{pmatrix}
u_i^+ \\
\bm{v}_i^+
\end{pmatrix}
=
\sigma\Pi_{\mathcal{Q}_{d+1}}
\left(
\begin{pmatrix}
r_i-r \\
\bm{c}_i-\bm{c}
\end{pmatrix}
+
\frac{1}{\sigma}
\begin{pmatrix}
u_i \\
\bm{v}_i
\end{pmatrix}
\right), \quad \forall\,i\in [m].
\end{equation*}
These $m$ projections are mutually independent. Denoting the current proximal augmented Lagrangian subproblem objective by $\Psi$, one evaluation of its gradient and generalized Hessian matrix (used in the subsequent semismooth Newton step) takes the form
\begin{equation*}
\nabla \Psi(r,\bm{c})
=
\begin{pmatrix}
1 \\
\bm{0}
\end{pmatrix}
-
\sum_{i=1}^m
\begin{pmatrix}
u_i^+\\ \bm v_i^+
\end{pmatrix}
+
\frac{\tau}{\sigma}
\begin{pmatrix}
r-\bar r \\
\bm c-\bar{\bm c}
\end{pmatrix},\quad
H := \frac{\tau}{\sigma}I_{d+1} + \sigma\sum_{i = 1}^m P_i
\in\widehat\nabla^2\Psi(r,\bm{c}),
\end{equation*}
where $P_i\in \partial \Pi_{\mathcal{Q}_{d+1}}(z_i)$ is an element of the Clarke generalized Jacobian~\cite{clarke1990optimization} of the projection mapping at
\begin{equation*}
z_i :=
\begin{pmatrix}
r_i-r \\
\bm{c}_i-\bm{c}
\end{pmatrix}
+
\frac{1}{\sigma}
\begin{pmatrix}
u_i \\
\bm{v}_i
\end{pmatrix},
\end{equation*}
and $\widehat{\nabla}^2\Psi(r,\bm{c})$ denotes the generalized Hessian at $(r,\bm{c})$. Since the second-order cone projection is firmly nonexpansive, the matrices $P_i$ are symmetric positive semidefinite~\cite{liang2021inexact}. Consequently, the matrix $H$ is symmetric positive definite because of the term $(\tau/\sigma)I_{d+1}$. This map-reduce pattern is well aligned with GPU execution: each cone projection uses only the data associated with one ball, while the sums of the projected multipliers and generalized Jacobian contributions are standard parallel reductions.

A structure-of-arrays layout is preferable for high-throughput implementations. Specifically, we store
\begin{equation*}
\begin{aligned}
    R:= &\;  (r_1,\ldots,r_m)^{\top}\in\mathbb R^m,\quad
C:=(\bm c_1^{\top};\ldots;\bm c_m^{\top})\in\mathbb R^{m\times d},\\ 
U:= &\;  (u_1,\ldots,u_m)^{\top}\in\mathbb R^m,\quad
V:=(\bm v_1^{\top};\ldots;\bm v_m^{\top})\in\mathbb R^{m\times d}.
\end{aligned}
\end{equation*}
Under this layout, the second-order cone projections can be evaluated using predicated operations or elementwise masks, avoiding serial branching over the index $i$. The same projection kernel can also compute the projected multipliers and their local contributions to the gradient. When an explicit generalized Hessian matrix is used, the local matrices $P_i$ are accumulated through reductions; when $d$ is large, it is preferable to avoid forming these matrices and instead compute Hessian-vector products directly. {The dominant memory traffic comes from reading $R,C,U,V$, forming the projection inputs, and reducing the projected multipliers or Hessian-vector contributions. Hence, the main implementation principle is to fuse the vector difference, norm computation, cone projection, and partial reductions whenever possible. For large $m$, the reduction step is often bandwidth-bound rather than arithmetic-bound, so avoiding unnecessary temporary arrays and host--device transfers is more important than minimizing the scalar operation count of the projection formula.}

\subsection{Semismooth Newton method for the inner subproblem}\label{subsec:ssn-inner}

First-order methods, including accelerated gradient methods~\cite{beck2017first}, provide simple inner solvers for the subproblem \eqref{eq:meb-prox-alm-subproblem}. They are attractive in the early outer iterations of ripALM, because the relative-type stopping condition \eqref{eq:meb-relative-error-condition} usually allows a relatively crude approximate minimizer. As the outer iterates approach a KKT point satisfying a stringent tolerance, however, the right-hand side of \eqref{eq:meb-relative-error-condition} becomes small, and the inner gradient residual must be reduced much more substantially. In this regime, first-order methods may require many projection-and-reduction passes, especially when a large penalty parameter makes the subproblem poorly conditioned. We therefore use an efficient semismooth Newton method as the main inner solver. Given the current inner iterate $(r^j,\bm c^j)$, the Newton direction $(\Delta r^j,\Delta \bm c^j)$ is computed from
\begin{equation*}
H^j \begin{pmatrix}
\Delta r^j \\
\Delta \bm{c}^j
\end{pmatrix}
= -\nabla\Psi(r^j,\bm{c}^j),
\qquad
H^j\in \widehat{\nabla}^2\Psi(r^j,\bm{c}^j).
\end{equation*}
The next iterate is then updated as
\begin{equation*}
(r^{j+1},\bm c^{j+1}) = (r^j,\bm c^j) + \eta_j(\Delta r^j,\Delta \bm c^j),
\end{equation*}
where the step size $\eta_j\in (0,1]$ is obtained by a standard Armijo line search for $\Psi$. The proximal term makes the subproblem strongly convex, while the second-order cone projection is strongly semismooth~\cite{chen2003complementarity}. Consequently, after the iterates enter a neighborhood of the subproblem solution, the semismooth Newton method typically accepts full steps (namely, $\eta_j=1$) and reduces the inner residual rapidly; see~\cite{zhao2010newton} for more details on a related semismooth Newton based augmented Lagrangian framework. This is precisely the regime in which a first-order inner solver becomes less attractive, because it must perform many iterations to achieve the smaller residuals.

The Newton system has dimension $d+1$, so the preferred linear algebra strategy depends on the ambient dimension. If $d$ is moderate, $H^j$ can be assembled and the Newton system can be solved by dense factorization. If $d$ is large, explicitly forming $H^j$ can be expensive, and a matrix-free Krylov method is more appropriate. In that case, Hessian-vector products are computed as
\begin{equation*}
H^j\bm{q} = \frac{\tau}{\sigma}\bm{q}
+\sigma\sum_{i=1}^m P_i^j \bm{q},\qquad \forall\,\bm{q}\in\mathbb{R}^{d+1}.
\end{equation*}
This operation has the same map-reduce structure as the gradient computation: the products $P_i^j\bm{q}$ are computed independently across $i$, and only their sum is reduced. In the matrix-free implementation, one only needs the action of $P_i^j$ on a vector, rather than the full matrix $P_i^j$. Thus, both the dense and matrix-free variants preserve the constraint-wise map-reduce structure of the augmented Lagrangian computations.

Algorithm~\ref{alg:ssn-subproblem} summarizes the semismooth Newton method for solving the subproblem \eqref{eq:meb-prox-alm-subproblem}.

\begin{algorithm}[htb!]
\caption{Semismooth Newton method for solving the subproblem \eqref{eq:meb-prox-alm-subproblem}}\label{alg:ssn-subproblem}
\begin{algorithmic}[1]
\State \textbf{Input:}
proximal center $(\bar{r},\bar{\bm{c}})$, auxiliary point $(\alpha,\bm{\beta})$, multipliers $\{(u_i,\bm v_i)\}_{i=1}^m$, parameters $\sigma,\tau>0$, and initial inner iterate $(r^0,\bm c^0)$.

\For{$j=0,1,2,\ldots$}
    \State In parallel over $i=1,\ldots,m$, compute
    \begin{equation*}
        z_i(r^j,\bm c^j)
        =
        \begin{pmatrix}
            r_i-r^j\\ \bm c_i-\bm c^j
        \end{pmatrix}
        +
        \frac{1}{\sigma}
        \begin{pmatrix}
            u_i\\ \bm v_i
        \end{pmatrix}, \quad
        \begin{pmatrix}
            u_i^+\\ \bm v_i^+
        \end{pmatrix}
        =
        \sigma\Pi_{\mathcal Q_{d+1}}(z_i(r^j,\bm c^j)),
        \quad
        P_i^j\in \partial \Pi_{\mathcal Q_{d+1}}(z_i(r^j,\bm c^j)).
    \end{equation*}
    \State Perform reductions
    \begin{equation*}
        \bm{g}^j
        =
        \begin{pmatrix}
            1\\\bm 0
        \end{pmatrix}
        -
        \sum_{i=1}^m
        \begin{pmatrix}
            u_i^+\\ \bm v_i^+
        \end{pmatrix}
        +
        \frac{\tau}{\sigma}
        \begin{pmatrix}
            r^j-\bar r\\ \bm c^j-\bar{\bm c}
        \end{pmatrix},\quad
        H^j
        =
        \frac{\tau}{\sigma}I_{d+1}
        +
        \sigma
        \sum_{i=1}^m P_i^j.
    \end{equation*}
    \If{the relative-type stopping condition \eqref{eq:meb-relative-error-condition} is satisfied by the tentative quantities}
        \State \textbf{return} $(r^j,\bm c^j)$ and the corresponding multipliers
        $\{(u_i^+,\bm v_i^+)\}_{i=1}^m$.
    \EndIf

    \State Compute the Newton direction $(\Delta r^j, \Delta \bm c^j)$ from
    \begin{equation*}
        H^j\begin{pmatrix}
            \Delta r^j \\ \Delta \bm c^j
        \end{pmatrix}=-\bm{g}^j.
    \end{equation*}
    \State Set $(r^{j+1},\bm c^{j+1})=(r^j,\bm c^j)+\eta_j(\Delta r^j,\Delta \bm c^j)$, where $\eta_j\in (0,1]$ is computed by Armijo line search.
\EndFor
\end{algorithmic}
\end{algorithm}

In the stopping test in Algorithm~\ref{alg:ssn-subproblem}, the current outer data in \eqref{eq:meb-relative-error-condition} are represented by $(\bar r,\bar{\bm c})$, $(\alpha,\bm{\beta})$, $\sigma$, $\tau$, and $\{(u_i,\bm v_i)\}_{i=1}^m$; the tentative accepted point is $(r^j,\bm c^j)$, with gradient $\bm{g}^j$ and multipliers $\{(u_i^+,\bm v_i^+)\}_{i=1}^m$.

In the numerical implementation, the full semismooth Newton step (namely, $\eta_j=1$) is typically accepted after a few line-search iterations, and the inner residual then decreases rapidly. This behavior complements the relative-type stopping rule in Algorithm~\ref{alg:ipalm-meb}: the early outer iterations do not require oversolving the subproblem, while later outer iterations benefit from the fast local convergence of the semismooth Newton method when the prescribed KKT tolerance is stringent.

\subsection{Mixed-precision reduction strategy}\label{sec:mixed-precision-reduction}

We next describe a mixed-precision reduction strategy used to accelerate the application of ripALM to the MEB reformulation \eqref{eq:meb-socp}. The main idea is to run ripALM first in low precision and with a loose stopping tolerance, thereby obtaining approximate geometric information about the boundary of the enclosing ball at low computational cost. This information is then used to select a smaller set of constraints that are likely to be active or nearly active. We then apply ripALM in high precision to the reduced problem until the prescribed KKT tolerance is reached. Thus, low precision is used only for screening and warm starting; the candidate ball returned from the reduced problem is always checked against the full set of constraints.

Specifically, at an optimal solution $(r^*, \bm{c}^*)$, the active constraints satisfy $r^*-r_i=\|\bm{c}^*-\bm{c}_i\|$, while inactive constraints have positive slack. In many instances, the optimal ball is determined by only a small subset of active or nearly active constraints. This geometric sparsity motivates a boundary-screening rule. After computing a coarse approximate solution $(\widehat{r},\widehat{\bm{c}},\{(\widehat{u}_i,\widehat{\bm{v}}_i)\}_{i=1}^m)$, we evaluate the approximate slack
\begin{equation*}
\widehat{s}_i := \widehat{r} - r_i - \|\widehat{\bm{c}}-\bm{c}_i\|,
\qquad \forall\,i\in[m].
\end{equation*}
If $\widehat s_i$ is sufficiently positive, then the $i$th ball is well inside the approximate enclosing ball and is unlikely to be active for the original problem. Conversely, balls with small or negative approximate slack are close to the approximate boundary, or even slightly outside the approximate enclosing ball, and should be retained.

Because $(\widehat r,\widehat{\bm c})$ is only approximate and the slack values may be affected by low-precision arithmetic, the screening rule must include a safety margin. Given a reduction tolerance $\texttt{tol}_{\rm red}>0$, we retain the index set
\begin{equation*}
\mathcal{I}_{\rm red}
:= \left\{i\in[m]:
\widehat{s}_i
\leq \texttt{tol}_{\rm red}
\right\}.
\end{equation*}
Equivalently, we discard only those constraints whose approximate slack is larger than $\texttt{tol}_{\rm red}$. The tolerance $\texttt{tol}_{\rm red}$ should be chosen to dominate the expected error in the low-precision center, radius, and distance evaluations. A larger value gives a more conservative reduction and is less likely to discard relevant constraints, whereas a smaller value gives a more aggressive reduction and a smaller reduced problem. After constructing $\mathcal I_{\rm red}$, we apply ripALM to the reduced problem
\begin{equation*}
\min_{r\in\mathbb{R},\; \bm{c}\in\mathbb{R}^d} 
\quad r
\qquad
\mathrm{s.t.}\qquad
r-r_i \geq \|\bm{c}-\bm{c}_i\|,
\quad \forall\,i\in \mathcal I_{\rm red},
\end{equation*}
with the prescribed final stopping tolerance. This run of ripALM is initialized by the coarse low-precision solution:
\begin{equation*}
(r^0,\bm{c}^0)=(\widehat{r},\widehat{\bm{c}}),
\qquad
(u_i^0,\bm{v}_i^0)=(\widehat{u}_i,\widehat{\bm{v}}_i),
\quad i\in\mathcal I_{\rm red}.
\end{equation*}
If the retained set includes enough boundary constraints to determine the optimal ball of the original problem, then the reduced problem has the same optimal solution. In practice, retaining nearly active constraints as well improves numerical robustness and reduces the chance that the refinement step below is needed.

For robustness, we further perform an a posteriori feasibility check against all original constraints after applying ripALM to the reduced problem. Let $(\bar{r},\bar{\bm{c}})$ denote the resulting radius and center. We compute the maximal violation
\begin{equation*}
\Delta_{\max} :=\max_{i\in[m]}\left\{r_i+\|\bar{\bm{c}}-\bm{c}_i\|-\bar{r}\right\}.
\end{equation*}
If $\Delta_{\max}\leq \varepsilon_{\rm feas}$, then $(\bar r,\bar{\bm c})$ is accepted as a feasible solution of the original problem up to the prescribed feasibility tolerance. Since the reduced problem is a relaxation obtained by removing constraints from the original problem, a high-accuracy solution of the reduced problem that also passes this full feasibility check provides an accurate candidate for the original problem, up to the prescribed optimization and feasibility tolerances. Otherwise, the violated constraints
\begin{equation*}
\mathcal{V} :=
\left\{i\in[m]: r_i+\|\bar{\bm{c}}-\bm{c}_i\|-\bar{r}>\varepsilon_{\rm feas}
\right\}
\end{equation*}
are added to $\mathcal I_{\rm red}$, and ripALM is applied again to the updated reduced problem. This active-set refinement protects the method against overly aggressive pruning in the low-precision phase: any discarded constraint that is violated by the candidate ball is detected and reintroduced. Therefore, the mixed-precision reduction serves only as an acceleration mechanism, while consistency with the original problem is maintained by the full verification and active-set update steps.

\section{Numerical Experiments}\label{sec:numerical-experiments}

In this section, we evaluate the efficiency, accuracy, and scalability of the proposed ripALM framework for solving the MEB reformulation \eqref{eq:meb-socp}. The experiments compare ripALM with both specialized computational-geometry software and general-purpose conic optimization solvers. We also test the mixed-precision implementation of ripALM, as described in Section~\ref{sec:mixed-precision-reduction}, which first screens the constraints in low precision and then applies ripALM to the reduced problem with the prescribed final stopping tolerance.

\paragraph{\bf Test instances.}
We follow the deterministic pseudo-random procedure used in~\cite{zhou2005efficient} to generate the test instances. Specifically, the centers and radii are generated from the pseudo-random sequence
\begin{equation*}
\psi_0 = 7,
\qquad
\psi_{j+1} = (445\psi_j+1) \bmod 4096,
\qquad j=0,1,2,\ldots,
\end{equation*}
and the scaled values
\begin{equation*}
\bar\psi_j = \frac{\psi_j}{40.96},
\qquad j=1,2,\ldots.
\end{equation*}
The radii and center coordinates are then assigned successively from $\{\bar{\psi}_j\}$ in the order
\begin{equation*}
r_1,\ c_1(1),\ c_1(2),\ldots,c_1(d),\
r_2,\ c_2(1),\ldots,c_2(d),
\ \ldots,\
r_m,\ c_m(1),\ldots,c_m(d).
\end{equation*}

\paragraph{\bf Baseline methods.}
We compare ripALM and its mixed-precision implementation with three representative baselines. The first baseline is CGAL~\cite{cgal:eb-26a}. For the ball instances considered here, we develop a Python interface to CGAL's C++ implementation for the smallest enclosing sphere of spheres in fixed dimension, which is a natural specialized benchmark for \eqref{eq:meb}. The routine is run with its default parameter settings. The other two baselines are obtained through CVXPY\footnote{Available at \url{https://www.cvxpy.org/}} by solving the SOCP formulation \eqref{eq:meb-socp}. Specifically, we use SCS, a first-order splitting method for conic optimization~\cite{ocpb:16}, and CLARABEL, a primal-dual interior-point solver for conic optimization~\cite{Clarabel_2024}. For both CVXPY-based solvers, we set the termination tolerance to $10^{-8}$ and the maximum number of iterations to 20000.

The direct high-precision implementation of Algorithm~\ref{alg:ipalm-meb} is denoted by ripALM. It terminates when the maximum KKT residual is below $10^{-8}$. The mixed-precision implementation, denoted by mp-ripALM, first applies ripALM in low precision with a loose stopping tolerance of $10^{-2}$, uses the resulting approximate ball to screen the constraints as described in Section~\ref{sec:mixed-precision-reduction}, and then applies Algorithm~\ref{alg:ipalm-meb} in high precision to the reduced problem until the maximum KKT residual is below $10^{-8}$. Here, the maximum KKT residual is defined as
\begin{equation*}
\begin{aligned}
\mathrm{KKT}_{\max}(r,\bm{c},\{(u_i,\bm{v}_i)\}_{i=1}^m)
:=\max\Bigg\{
&\left|1-\sum_{i=1}^m u_i\right|,
\left\|\sum_{i=1}^m \bm{v}_i\right\|, \max_{1\le i\le m}
\operatorname{dist}\!\left(\begin{pmatrix}
r - r_i \\ \bm{c} - \bm{c}_i
\end{pmatrix},\mathcal{Q}_{d+1}\right), \\
&\max_{1\le i\le m}
\operatorname{dist}\!\left(\begin{pmatrix}
u_i \\ \bm{v}_i
\end{pmatrix},\mathcal{Q}_{d+1}\right), \\
&\max_{1\le i\le m}
\left|(r - r_i)u_i + \langle \bm{c} - \bm{c}_i, \bm{v}_i\rangle\right|
\Bigg\},
\end{aligned}
\end{equation*}
where $\operatorname{dist}(\cdot,\mathcal{Q}_{d+1})$ denotes the Euclidean distance to the second-order cone $\mathcal{Q}_{d+1}$.

\paragraph{\bf Implementation and computing environment.}
All experiments are conducted on a desktop workstation equipped with an Intel Core i7-13700K CPU, 64 GB of RAM, and an NVIDIA GeForce RTX 4070 GPU with 12 GB of dedicated GPU memory. The GPU implementation of ripALM using PyTorch exploits the constraint-wise map-reduce structure exposed by the augmented Lagrangian formulation. Its dominant operations are batched Euclidean norm evaluations, projections onto second-order cones, and reduction operations over the $m$ input balls. These operations are well suited to massively parallel execution and constitute the main source of the computational advantage of ripALM on large-scale instances. By contrast, SCS and CLARABEL are used as general-purpose conic solvers through CVXPY and are run on the CPU. Although these solvers are robust and convenient, they do not explicitly exploit the special structure of the MEB problem. CGAL is a specialized computational-geometry implementation and is also run on the CPU in our experiments.

\paragraph{\bf Performance metrics.}
For each pair $(d,m)$, we generate one test instance according to the procedure described above and apply all applicable methods. We report the wall-clock running time in seconds, and the relative error of the returned radius. The latter is measured with respect to the CGAL radius, which is used as the reference value. Specifically, if $(\bar r,\bar{\bm c})$ is the returned ball and $r^{\mathrm{ref}}$ is the reference radius computed by CGAL, then $\mathrm{relerr}(\bar r):={|\bar r-r^{\mathrm{ref}}|}/{|r^{\mathrm{ref}}|}$. The results are reported as functions of $m$ for each fixed dimension $d$, which allows us to compare scalability with respect to the number of input balls. The comparison against SCS and CLARABEL evaluates the benefit of exploiting the special structure of the MEB reformulation \eqref{eq:meb-socp} instead of treating it as a generic conic problem. The comparison against CGAL evaluates the competitiveness of ripALM relative to a specialized computational-geometry implementation. Finally, the comparison between ripALM and mp-ripALM evaluates the overall effect of the mixed-precision reduction strategy: a low-precision pass identifies a much smaller set of relevant balls, and the subsequent high-precision application of ripALM is carried out on this reduced problem to obtain an accurate final ball with less wall-clock time.

\paragraph{\bf Discussion of the results.}
Figure~\ref{fig:meb-results} reports the running time and relative radius error for
\begin{equation*}
d\in \{20,30,40,50\}
\qquad \text{and}\qquad
m\in \{10^3,5\times 10^3,10^4,5\times 10^4,10^5\}.
\end{equation*}
The results demonstrate a clear scalability advantage of the proposed ripALM implementations on the tested instances. While all methods are efficient on small instances, CGAL becomes substantially slower as $m$ and $d$ increase; for $d=40$ and $d=50$, this behavior is consistent with the dimension-dependent complexity of LP-type enclosing-ball routines~\cite{cgal:eb-26a}. By contrast, both ripALM and mp-ripALM remain efficient throughout the tested range. The two CVXPY-based conic solvers, SCS and CLARABEL, are less competitive on the larger instances, due to the large number of second-order cone constraints in the SOCP reformulation. In particular, SCS and CLARABEL return larger relative radius errors than the other methods in these experiments. The mixed-precision implementation further improves the computational efficiency of ripALM while preserving high final accuracy. In most tested instances, mp-ripALM is faster than ripALM because the low-precision phase rapidly identifies a small set of relevant balls, and the subsequent high-precision phase applies ripALM only to this reduced problem, as demonstrated in Table~\ref{tab:mixed-ripalm-kept}. The accuracy results show that both implementations return highly accurate enclosing balls, with relative errors typically smaller than $10^{-13}$, comparable to the accuracy obtained by CGAL.

\begin{figure}[htb!]
\centering
\begin{subfigure}{\textwidth}
\centering
\includegraphics[width=0.4\textwidth]{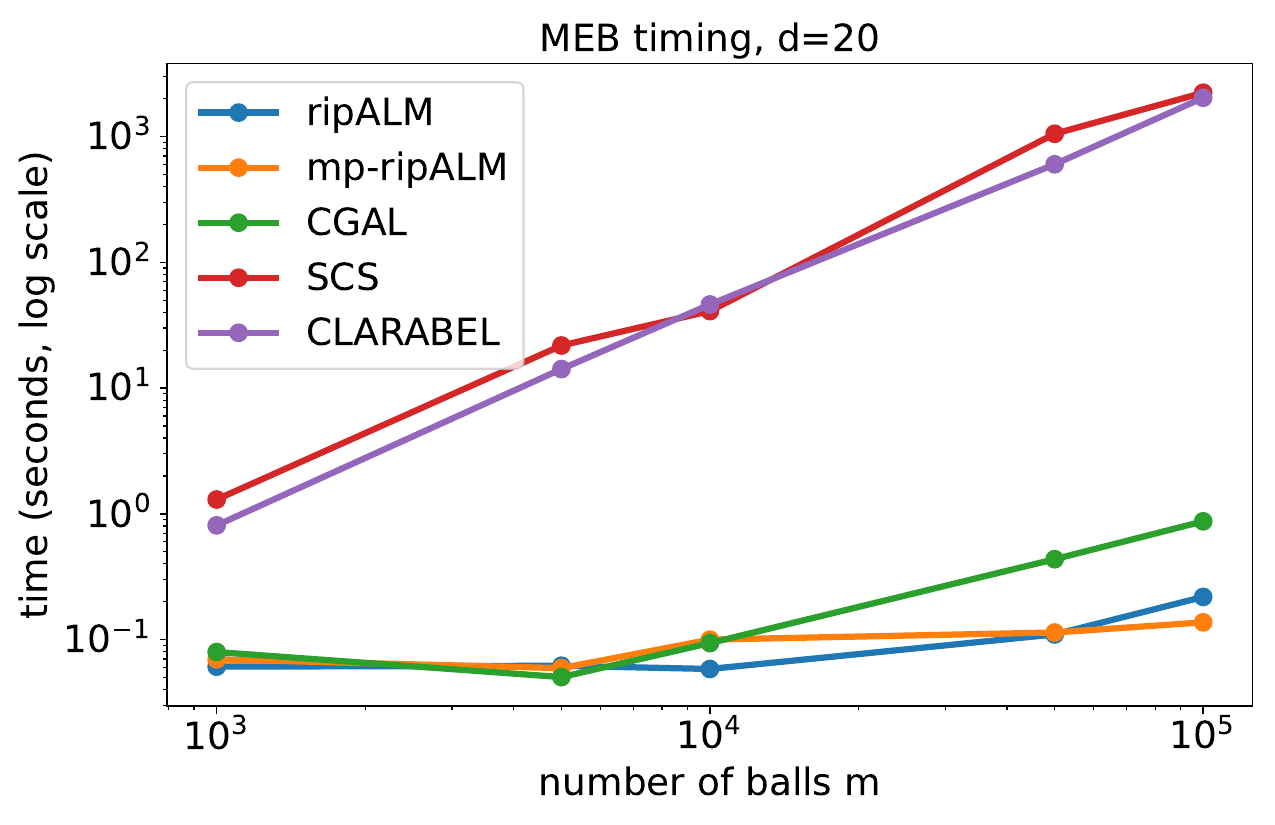}\qquad\quad
\includegraphics[width=0.4\textwidth]{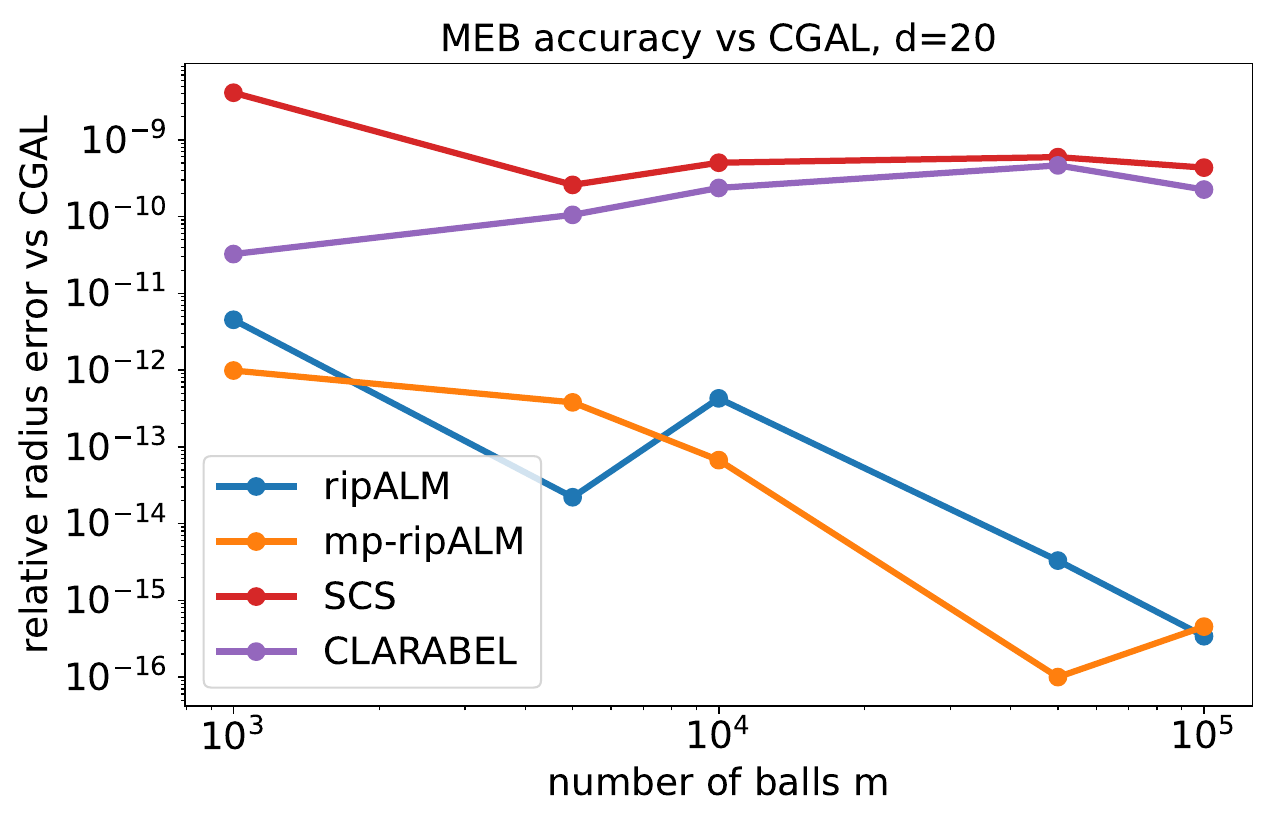}
\caption{$d=20$}
\end{subfigure}

\begin{subfigure}{\textwidth}
\centering
\includegraphics[width=0.4\textwidth]{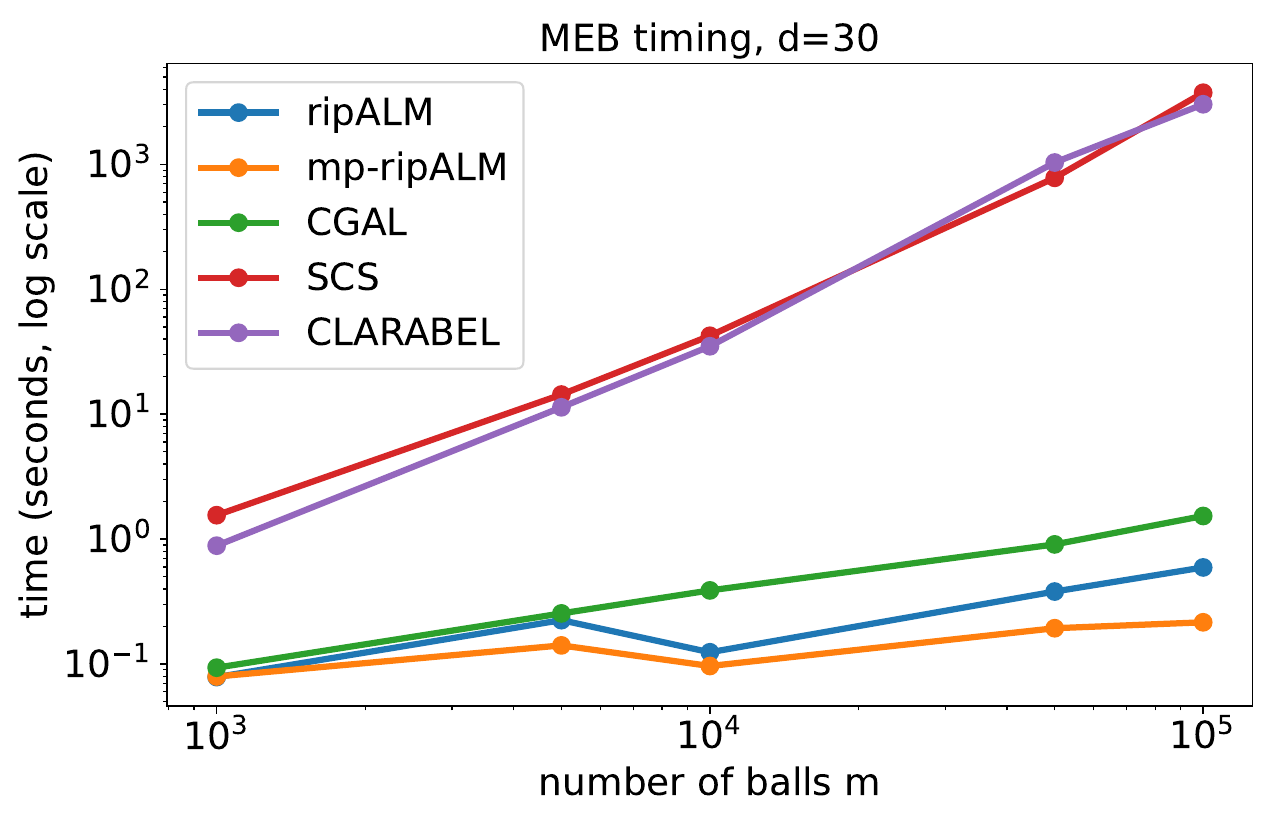}\qquad\quad
\includegraphics[width=0.4\textwidth]{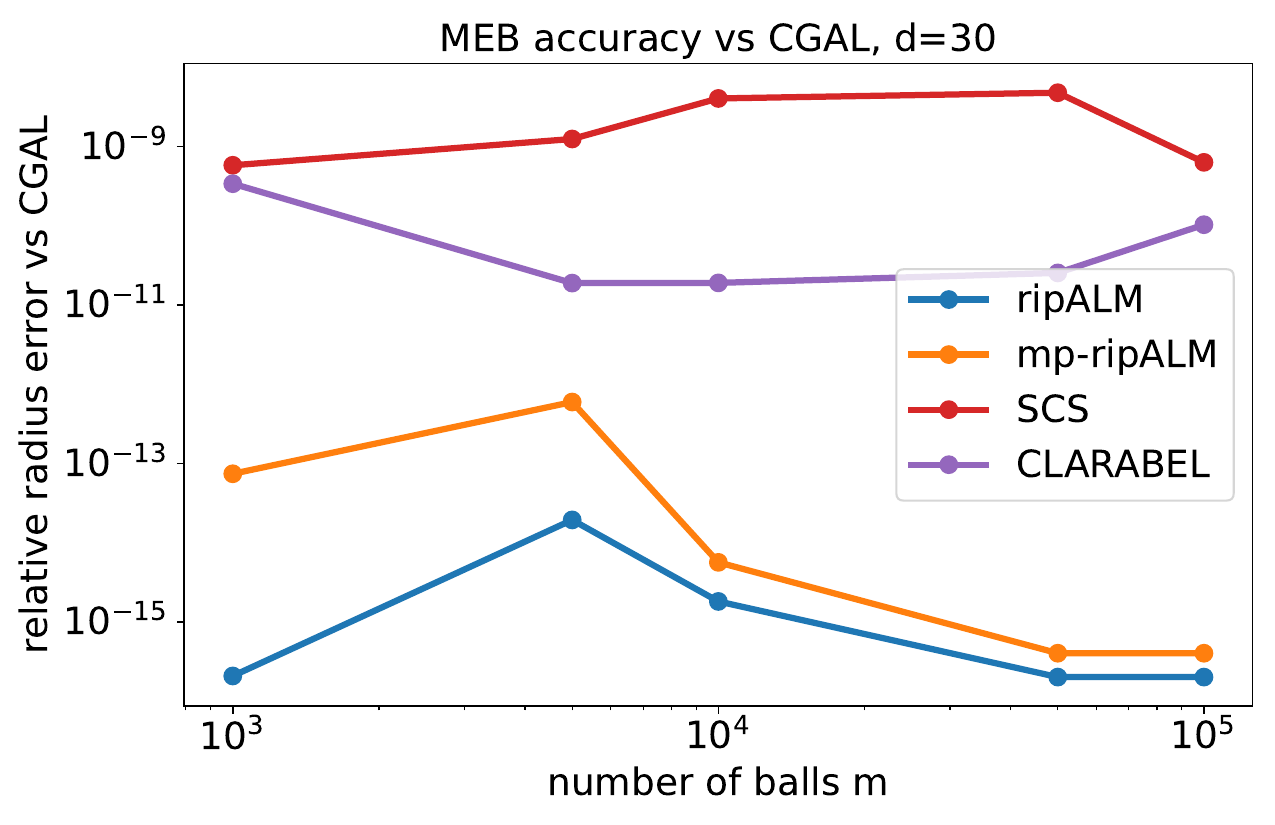}
\caption{$d=30$}
\end{subfigure}

\begin{subfigure}{\textwidth}
\centering
\includegraphics[width=0.4\textwidth]{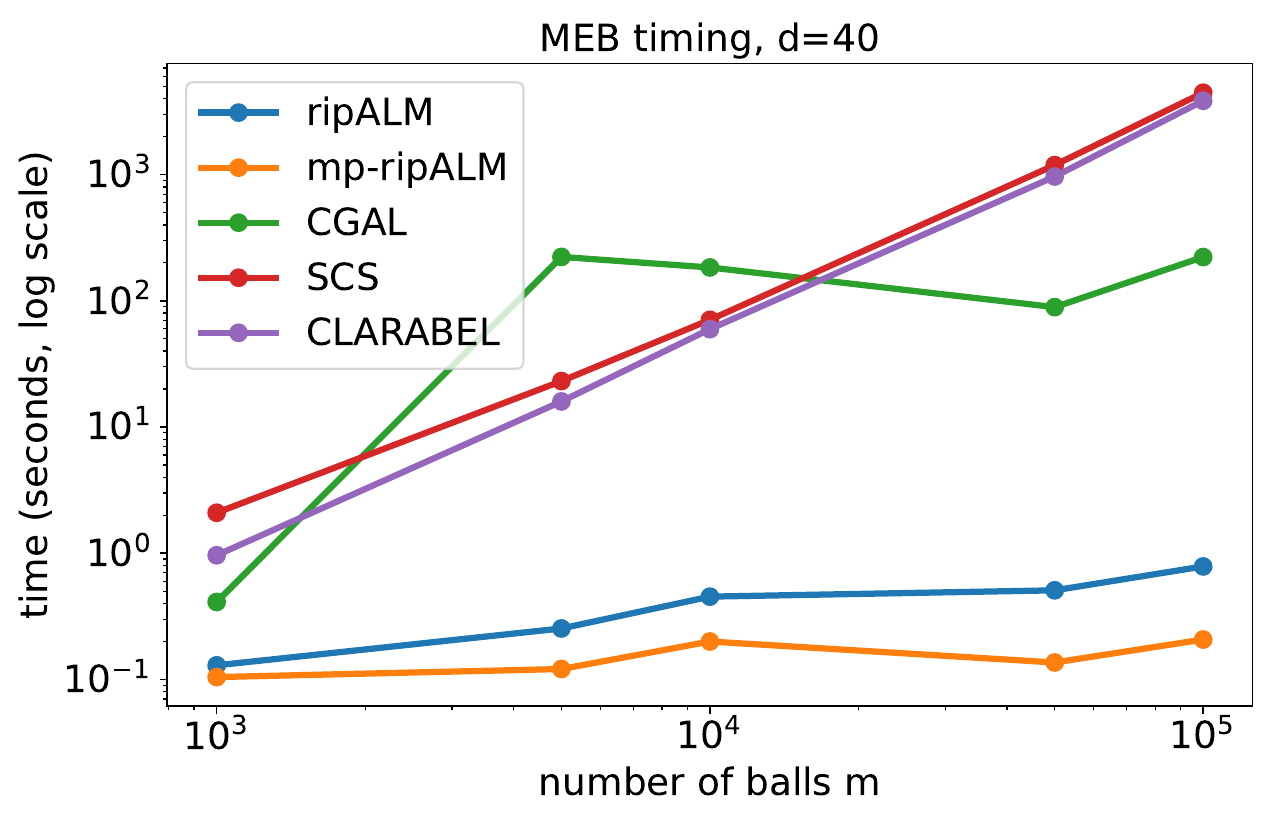}\qquad\quad
\includegraphics[width=0.4\textwidth]{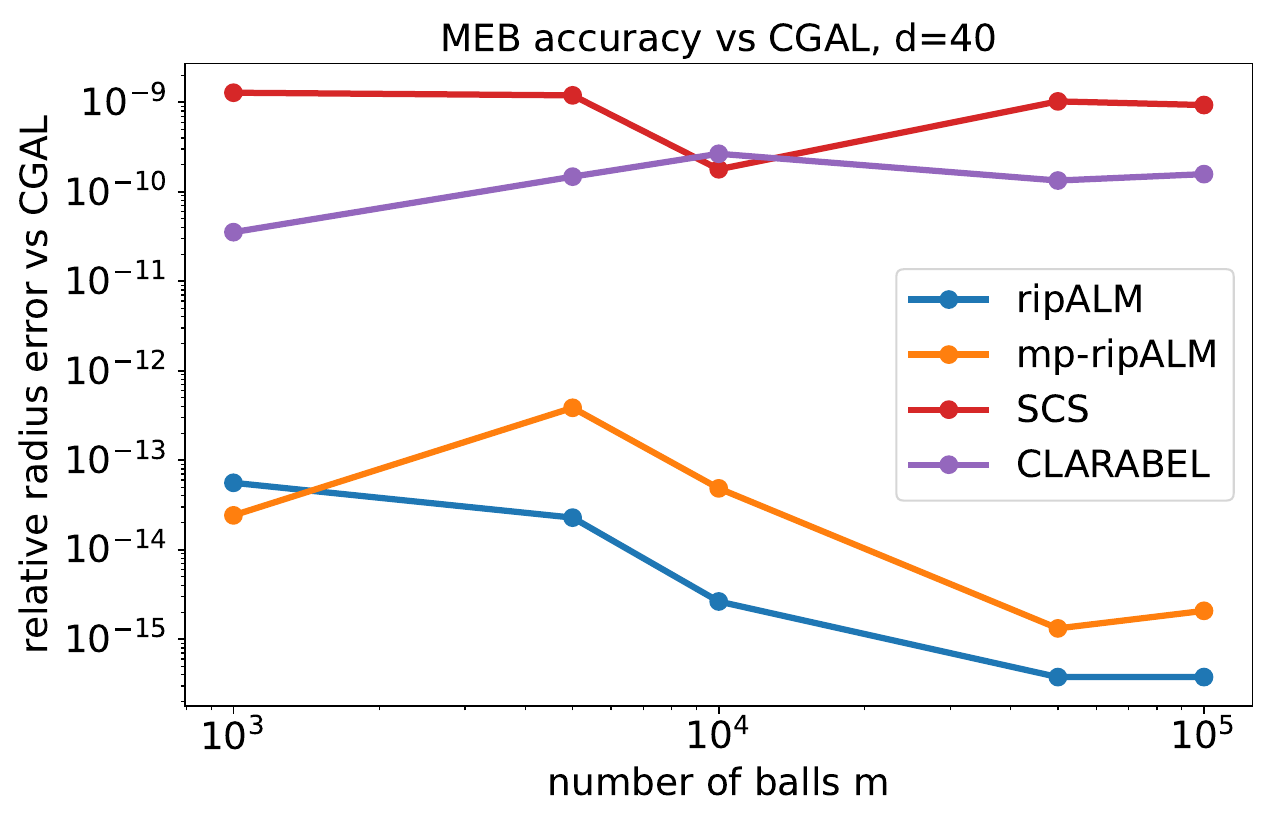}
\caption{$d=40$}
\end{subfigure}

\begin{subfigure}{\textwidth}
\centering
\includegraphics[width=0.4\textwidth]{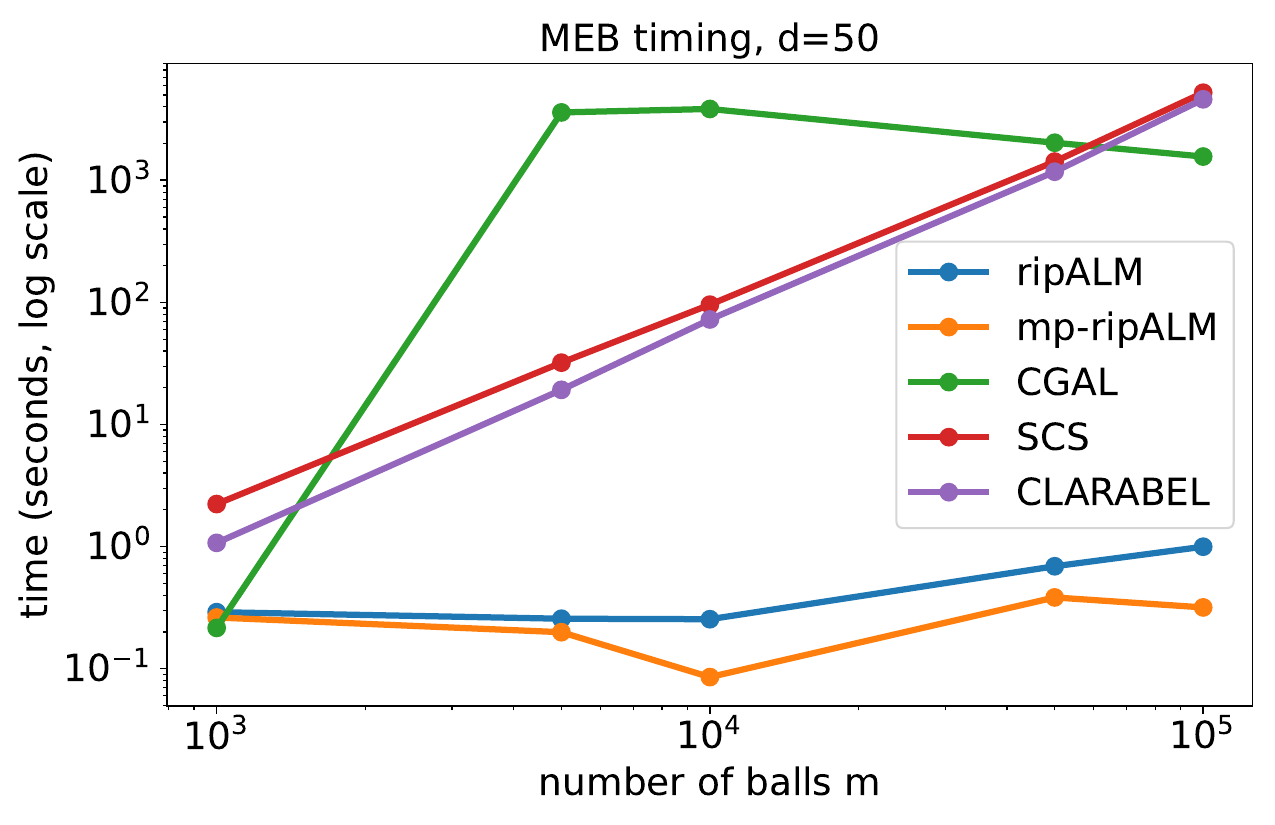}\qquad\quad
\includegraphics[width=0.4\textwidth]{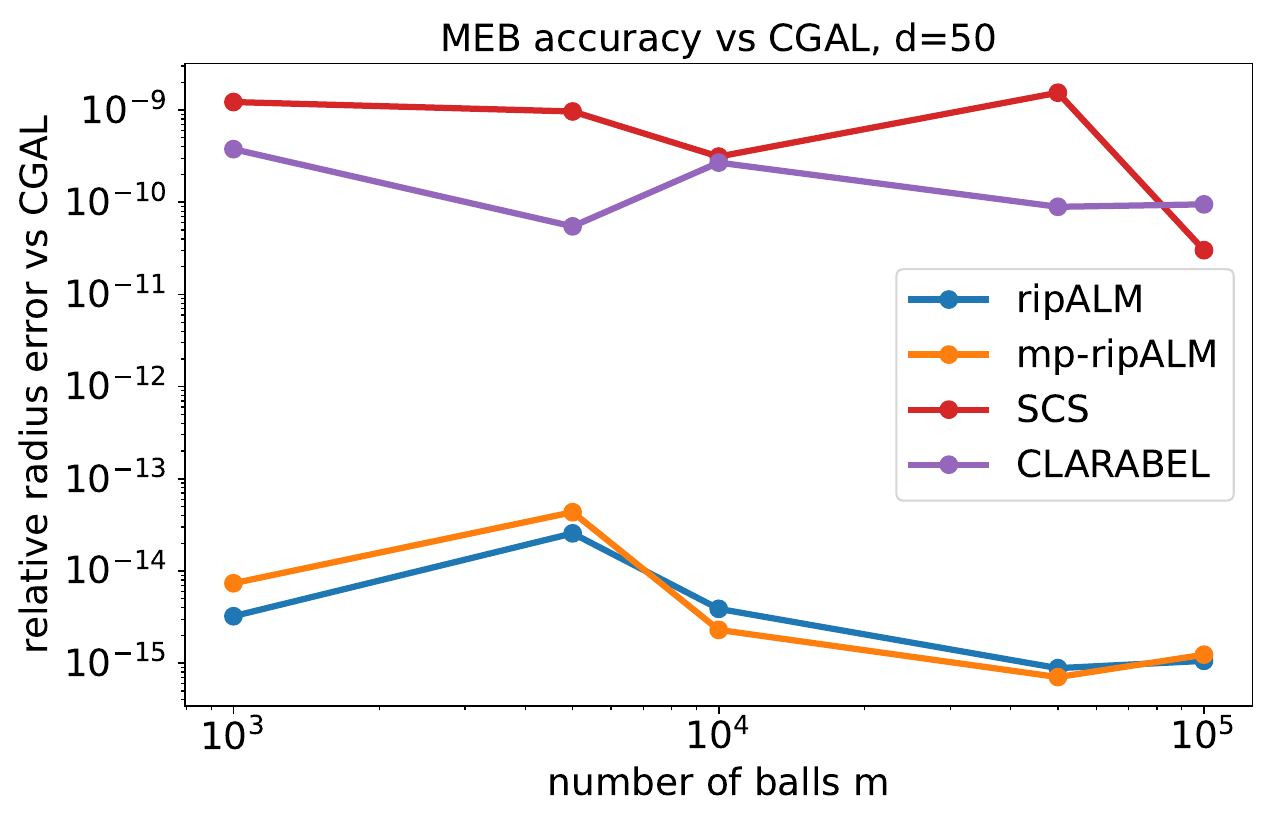}
\caption{$d=50$}
\end{subfigure}

\caption{Comparison of different solvers on the generated MEB instances. Each row corresponds to one dimension $d\in\{20,30,40,50\}$; the left column reports the running time in seconds, and the right column reports the relative error of the returned radius.}\label{fig:meb-results}
\end{figure}

\begin{table}[htb!]
\centering \tabcolsep 8pt
\begin{tabular}{rrrrrr}
\hline
$d$ & $m=10^3$ & $m=5{\times}10^3$ & $m=10^4$ & $m=5{\times}10^4$ & $m=10^5$ \\
\hline
20 & 23 (2.30\%) & 43 (0.86\%) & 82 (0.82\%) & 405 (0.81\%) & 807 (0.81\%) \\
30 & 28 (2.80\%) & 42 (0.84\%) & 88 (0.88\%) & 449 (0.90\%) & 899 (0.90\%) \\
40 & 45 (4.50\%) & 77 (1.54\%) & 155 (1.55\%) & 780 (1.56\%) & 1586 (1.59\%) \\
50 & 41 (4.10\%) & 96 (1.92\%) & 182 (1.82\%) & 929 (1.86\%) & 1852 (1.85\%) \\
\hline
\end{tabular}
\vspace{1em}
\caption{Number of balls kept for the second phase of mp-ripALM, i.e., the high-precision application of ripALM to the reduced problem. Percentages are relative to the original number of balls $m$.}\label{tab:mixed-ripalm-kept}
\end{table}

To further examine the effect of the ambient dimension, we solve high-dimensional instances with $d \in \{10^2, 5\times 10^2, 10^3, 5\times 10^3, 10^4\}$ and $m\in\{10^3,5\times 10^3\}$. Figure~\ref{fig:meb-results-dims} reports the computational time of ripALM and mp-ripALM, together with the speedup factor achieved by mp-ripALM. Both methods remain effective across this range of dimensions. Moreover, mp-ripALM consistently reduces the running time, because the low-precision phase provides a useful warm start and removes many inactive constraints before the high-precision refinement.

\begin{figure}[htb!]
\centering
\begin{subfigure}{\textwidth}
\centering
\includegraphics[width=0.4\textwidth]{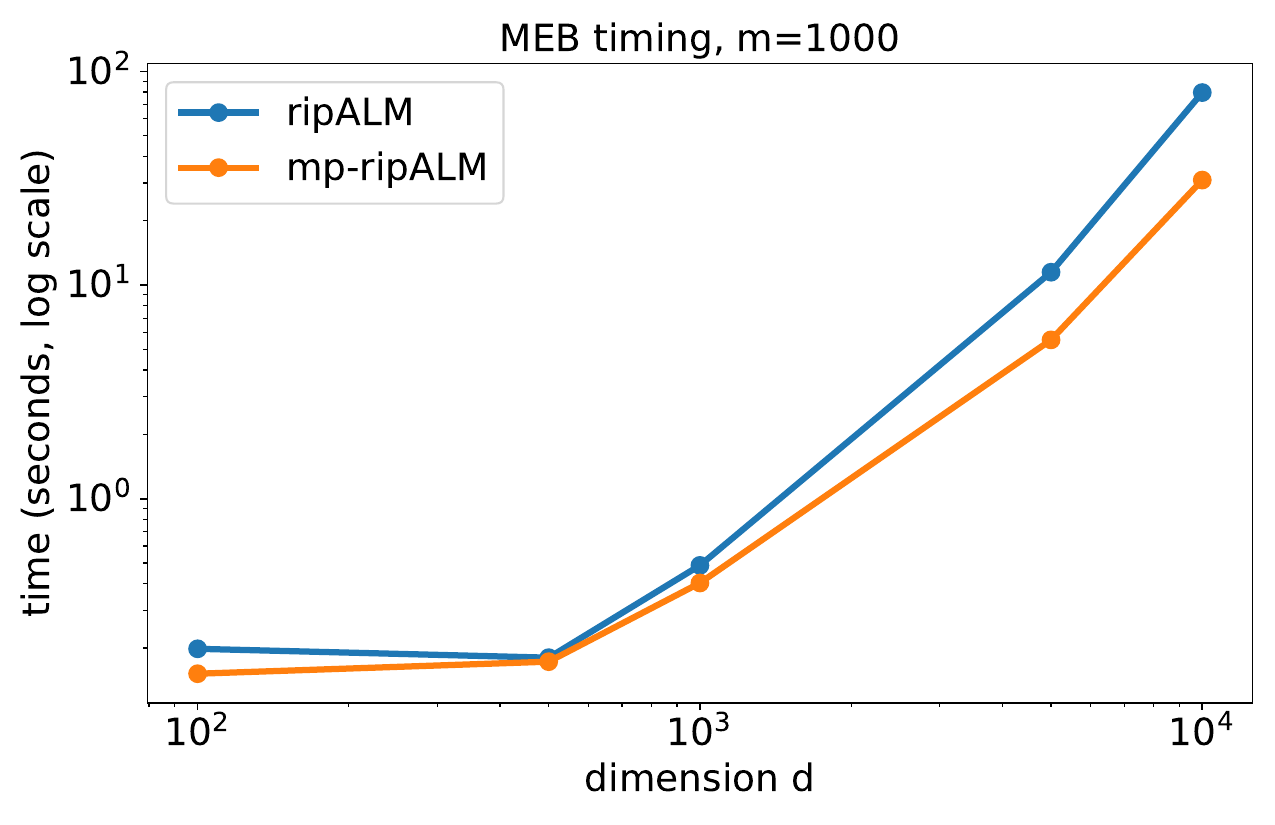}\qquad\quad
\includegraphics[width=0.4\textwidth]{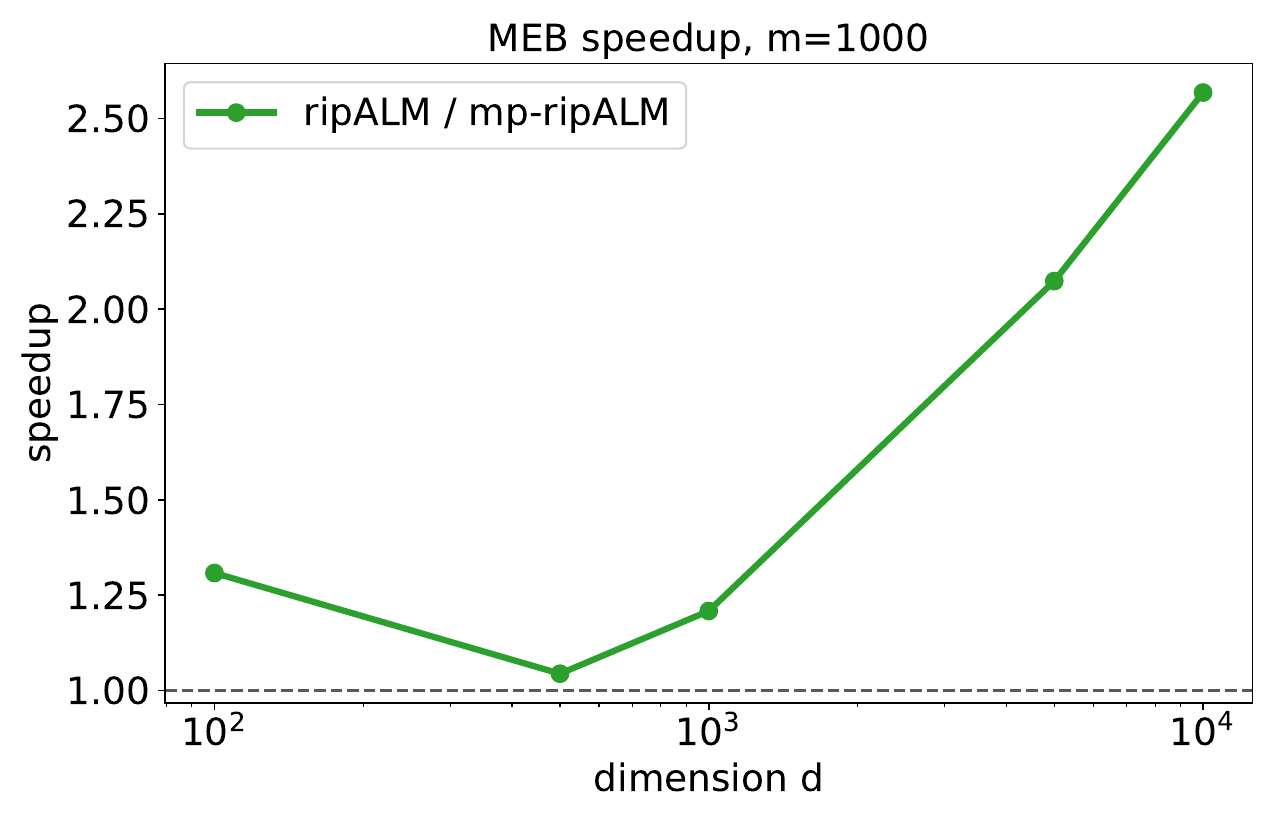}
\caption{$m=1000$}
\end{subfigure}

\begin{subfigure}{\textwidth}
\centering
\includegraphics[width=0.4\textwidth]{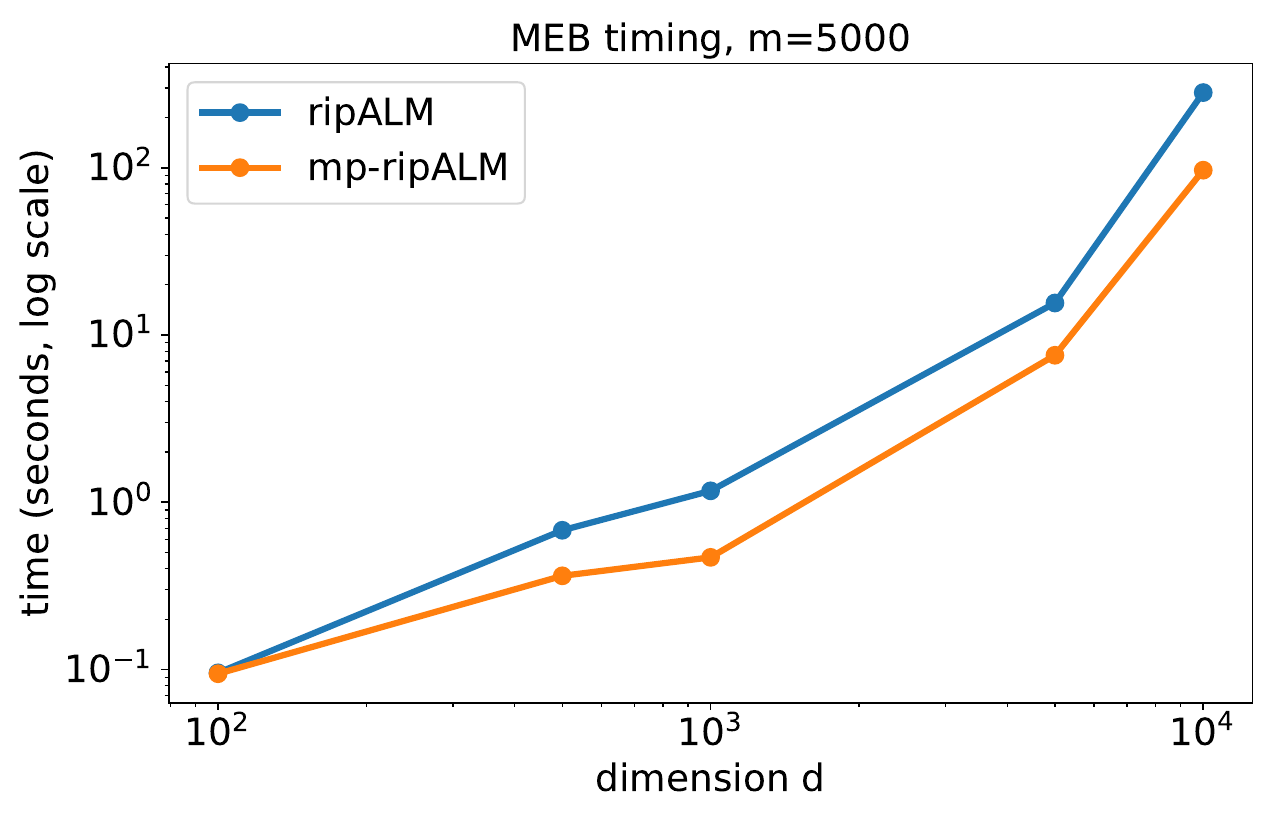}\qquad\quad
\includegraphics[width=0.4\textwidth]{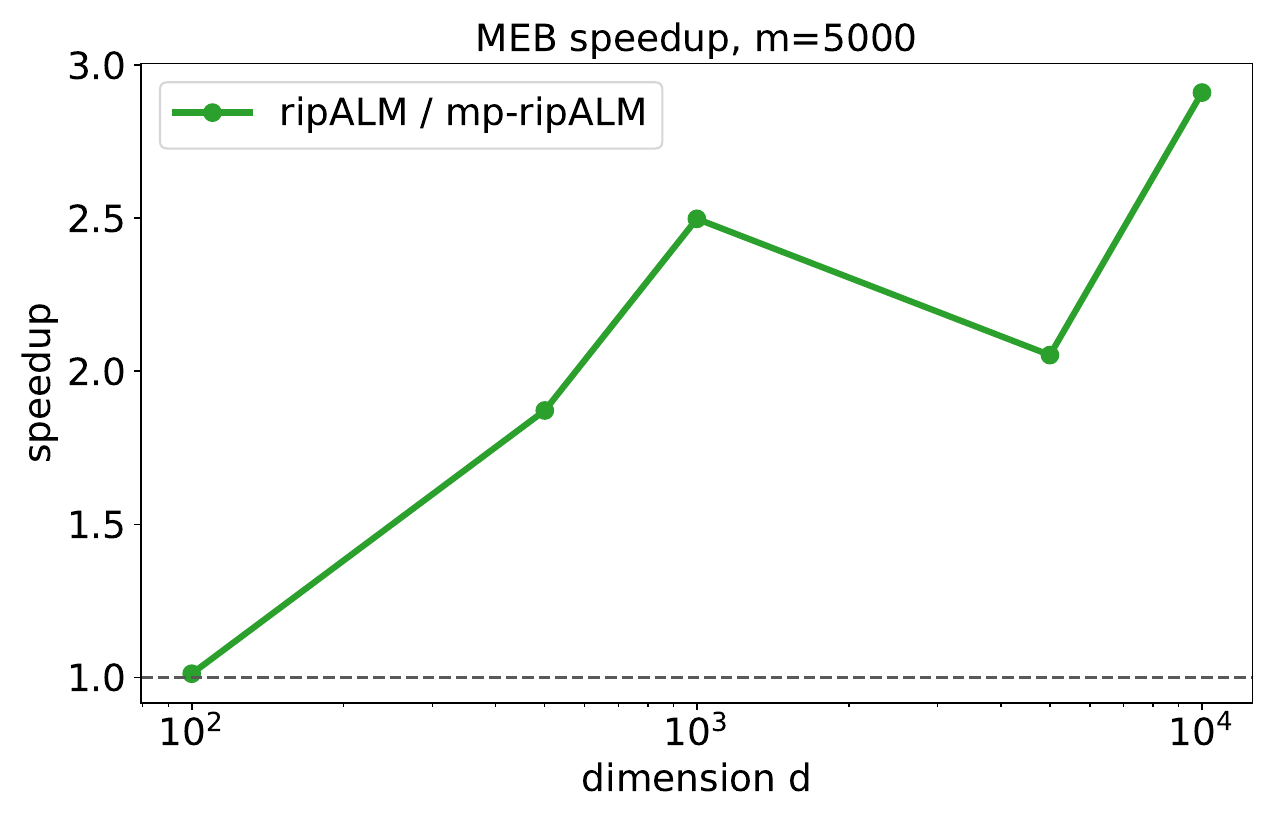}
\caption{$m=5000$}
\end{subfigure}

\caption{High-dimensional performance of ripALM and mp-ripALM. Left: computational time. Right: speedup factor of mp-ripALM relative to ripALM.}\label{fig:meb-results-dims}
\end{figure}

\section{Conclusion}\label{sec:conclusion}

This paper has developed a GPU-oriented ripALM framework for the MEB problem. Starting from an SOCP reformulation, we derived a proximal augmented Lagrangian scheme whose subproblem evaluations inherit the constraint-wise separability of the original formulation. As a result, the subproblem objective, its gradient and generalized Hessian contributions, and multiplier updates can be computed through independent operations over the input balls followed by parallel reductions. The relative-type stopping condition further adapts the accuracy of the inner solves to the progress of the outer iteration, thereby avoiding unnecessary oversolving while preserving the convergence guarantees of the proposed ripALM.

We have also proposed mp-ripALM, a mixed-precision reduction strategy designed to accelerate high-accuracy computation. In this approach, a low-precision phase is used to screen potentially active or nearly active balls and to provide a warm start, while a high-precision phase refines the solution on the reduced problem. A full feasibility verification over the original constraint set, together with active-set updates when necessary, ensures that the reduction remains consistent with the original MEB problem. The numerical results show that the proposed implementations achieve substantial computational gains over CGAL and the generic conic solvers accessed through CVXPY on the tested instances, while maintaining high accuracy. Future work includes developing sharper theoretical guarantees for the active-set reduction step and extending the implementation to multi-GPU or distributed environments.

\bibliographystyle{plainnat}
\bibliography{Refs_MEB}


\end{document}